\documentclass[10pt,reqno]{amsart}
\usepackage{amssymb}
\usepackage{amsfonts}
\usepackage[all]{xy}
\usepackage[english]{babel}
\usepackage{booktabs,multirow,graphicx}
\usepackage[notref,notcite,final]{showkeys}
\usepackage{enumerate}
\usepackage{verbatim}
\usepackage{tikz} 
\newcommand{\omeg}{\omega}

\theoremstyle{plain}
\newtheorem{theorem}{Theorem}
\newtheorem*{claim}{Claim}
\newtheorem{lemma}{Lemma}
\theoremstyle{definition}
\newtheorem{definition}{Definition}
\newtheorem*{remark}{Remark}

\vsize=22truecm

\setlength{\oddsidemargin}{0cm}
\setlength{\evensidemargin}{0cm}
\setlength{\topmargin}{0.5cm}
\setlength{\textwidth}{16cm}
\setlength{\textheight}{23cm}
\begin{document}

\title{On the weakest version of distributional chaos}

\author{Jana Hant\'akov\'a}
\author{Samuel Roth*}
\author{Zuzana Roth}

\address{ Mathematical Institute, Silesian University, CZ-746 01 
Opava, Czech Republic}

\email{jana.hantakova@math.slu.cz (Jana Hant\'akov\'a), samuel.roth@math.slu.cz (Samuel Roth), zuzana.roth@math.slu.cz (Zuzana Roth)}

\thanks{* denotes the corresponding author\\ The research was supported by grant SGS/2/2013 from  the Silesian University in Opava. Support of this institution is gratefully acknowledged.}

\begin{abstract} 
The aim of the paper is to correct and improve some results concerning distributional chaos of type 3. We show that in a general compact metric space, distributional chaos of type 3, denoted DC3, even when assuming the existence of an uncountable scrambled set, is a very weak form of chaos. In particular, (i)  the chaos can be unstable (it can be destroyed by conjugacy),  and (ii)  such an unstable system may contain no Li-Yorke pair. However, the definition can be strengthened to get DC$2\frac{1}{2}$  which is a topological invariant and implies Li-Yorke chaos, similarly as types DC1 and DC2; but unlike them, strict DC$2\frac{1}{2}$ systems must have zero topological entropy.\\
{\small {2000 {\it Mathematics Subject Classification.}}
Primary 37D45; 37B40.
\newline{\small {\it Key words:} Distributional chaos; Li-Yorke chaos; distal chaotic system.}}
\end{abstract}

\maketitle
\pagestyle{myheadings}
\markboth{J. Hant\'akov\'a, S. Roth, Z. Roth}
{On the weakest version of distributional chaos}

\section{Introduction}
The study of chaotic pairs in dynamics goes back to Li and Yorke \cite{LY}, who studied pairs of points with the property that their orbits neither approach each other asymptotically, nor do they eventually separate from each other by any fixed positive distance. Schweizer and Smital \cite{SchSm} introduced the related concept of a distributionally chaotic pair, which means, roughly speaking, that the statistical distribution of distances between the orbits does not converge. Distributional chaos was later divided into three types, DC1, DC2, and DC3, see \cite{BSS}.  

The relations between the three versions of distributional chaos, and the relation between distributional chaos and Li-Yorke chaos are investigated by many authors, see e.g. \cite{BSS,WLF,WLW,O}. One can easily see from the definitions that DC1 implies DC2 and DC2 implies DC3. On the other hand, there are examples which show that DC1 is stronger than DC2 and DC2 is stronger than DC3 (see \cite{BSS,WLF}). It is also obvious that either DC1 or DC2 implies Li-Yorke chaos. Moreover, there are Li-Yorke chaotic continuous maps of the interval with zero topological entropy; by \cite{SchSm}, such maps cannot be distributionally chaotic. This shows that Li-Yorke chaos need not imply any of the three versions of distributional chaos.

We focus on the properties of the weakest form of distributional chaos, DC3.  Unlike its stronger relatives, DC3 chaos does not imply Li-Yorke chaos, and DC3 chaos is not an invariant of topological conjugacy.  In a weak sense, these two results were already stated in \cite[Theorems 1 and 2]{BSS}. However, it should be noticed that the distributional chaos in \cite{BSS} was defined as the existence of a single distributionally scrambled pair, but nowadays it is generally assumed that distributional chaos means the existence of an uncountable distributionally scrambled set.  Moreover, the proof of \cite[Theorem 2]{BSS} is unfortunately in error - the authors constructed a conjugacy which destroys a DC3 pair, but they overlooked many other DC3 pairs which persist. Our first goal, then, is to give stronger statements and correct proofs of these two theorems.

Our second goal is to strengthen the definition of the DC3 pair in such a way that it is preserved under conjugacy and implies Li-Yorke chaos -- we denote the new definition by DC2$\frac{1}{2}$. We also provide an example which shows that $DC2\frac{1}{2}$ is essentially weaker than $DC2$. This example possesses no DC2 pair, hence, by results in \cite{DL}, its topological entropy must be zero.

Another strengthened distributional chaos, denoted by DC1$\frac{1}{2}$, was introduced in \cite{DL}. DC1$\frac{1}{2}$ chaos is stronger than $DC2$ and is implied by positive topological entropy.

The paper is organised as follows: the first and second sections are introductory. In the third section we show the error in \cite{BSS} and simultaneously prove that even an uncountable DC3 scrambled set does not imply Li-Yorke chaos. The fourth section proves (with two new examples) that both the existence of a DC3 pair and the existence of an uncountable DC3 scrambled set are not conjugacy invariants. The fifth section introduces our new definition of DC2$\frac{1}{2}$.

\section{Terminology}
Let $(X,d)$ be a non-empty compact metric space.  A pair $(X,f)$, where $f$ is a continuous self-map acting on $X$, is called a \emph{topological dynamical system}.  We define the \emph{forward orbit} of $x$, denoted by $Orb^+_f(x)$ as the set $\{f^n(x):n\geq0\}$.
Let $(X,f)$ and $(Y,g)$ be dynamical systems on compact metric spaces. A continuous map $\pi: X\rightarrow Y$ is a conjugacy between $f$ and $g$ if $\pi$ is one-to-one and onto and $\pi\circ f=g\circ\pi$. Denote by $I$ the unit interval $[0,1]$.

\begin{definition}
A pair of two different points $(x_1, x_2)\in X^2$ is \emph{scrambled} if 
\begin{equation}\label{Li1}\liminf_{k\to\infty}d(f^k(x_1),f^{k}(x_2))=0\end{equation} and 
\begin{equation}\label{Li2}\limsup_{k\to\infty} d(f^k(x_1),f^{k}(x_2))>0.\end{equation}
A subset $S$ of $X$ is \emph{scrambled} if every pair of distinct points in $S$ scrambled. The system $(X,f)$ is \emph{chaotic} if there exists an uncountable scrambled set.
We call a pair $(x_1, x_2)\in X^2$ \emph{proximal} if (\ref{Li1}) holds (otherwise we say $(x_1, x_2)\in X^2$ is \emph{distal}). If (\ref{Li2}) does not hold, i. e. $$\limsup_{k\to\infty} d(f^k(x_i),f^{k}(x_j))=0,$$
we say that $(x_1, x_2)\in X^2$ is \emph{asymptotic}. A pair of points is scrambled simply if it is proximal but not asymptotic. A dynamical system $X$ is distal if every pair of distinct points in $X$ is distal.
\end{definition}

\begin{definition}
For a pair $(x_1, x_2)$ of
points in $X$, define the \emph{lower distribution function} generated by $f$ as
$$\Phi_{(x_1, x_2)}(\delta)=\displaystyle\liminf_{m\to\infty}\frac{1}{m}\#\{0 \le k \le m;d(f^k(x_1),f^{k}(x_2))<\delta\},$$
and the \emph{upper distribution function} as 
$$\Phi^*_{(x_1, x_2)}(\delta)=\displaystyle\limsup_{m\to\infty}\frac{1}{m}\#\{0 \le k \le m;d(f^k(x_1),f^{k}(x_2))<\delta\},$$
where $\#A$ denotes the cardinality of the set $A$.\\ 
A pair $(x_1, x_2)\in X^2$ is called 
\emph{distributionally scrambled of type 1} if 
$$\Phi^*_{(x_1, x_2)}(\delta)=1, \mbox{  for every $0<\delta\le \text{diam }X$}$$
  and  
  $$ \Phi_{(x_1, x_2)}(\epsilon)=0, \mbox{  for some  }0<\epsilon\le \text{diam }X ,$$
\emph{distributionally scrambled of type 2} if 
$$\Phi^*_{(x_1, x_2)}(\delta)=1, \mbox{  for every $0<\delta\le \text{diam }X$}$$
  and  
  $$ \Phi_{(x_1, x_2)}(\epsilon)< 1,\mbox{  for some  }0<\epsilon\le \text{diam }X ,$$
\emph{distributionally scrambled of type 3} if $$ \Phi_{(x_1, x_2)}(\delta)<\Phi^*_{(x_1, x_2)}(\delta), \mbox{  for every $\delta\in (a,b),$ where }0\leq a<b\le \text{diam }X.$$
The dynamical system $(X,f)$ is \emph{distributionally chaotic of type $i$} (DC$i$ for short), where $i=1,2,3$, if there is an uncountable set $S\subset X$ such that any pair of
distinct points from $S$ is distributionally scrambled of type $i$.
\end{definition}

Let $X,Y$ be compact metric spaces and $X\times Y$ be equipped with the product topology. Then a continuous map $F:X\times Y\to X\times Y$ is a \emph{skew-product mapping} if it has the form $F\big((x,y)\big)=\big(f(x),g_x(y)\big)$.  Then $f:X \to X$ is called the \emph{base map} and the maps $g_x:{Y\to Y}$ are called \emph{fiber maps}.

\section{Distal DC3 system}

The main goal of this section is to prove that DC3 chaos does not imply Li-Yorke chaos.  We will prove this statement in the strongest possible sense -- we present a system with an uncountable DC3 scrambled set but without any Li-Yorke pairs.

\begin{theorem}\label{th:DC3LiY}
There exists a distal dynamical system which is DC3 chaotic.  Thus, DC3 chaos does not imply Li-Yorke chaos.
\end{theorem}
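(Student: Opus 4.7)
The plan is to exhibit an explicit distal system $(X,F)$ on a compact metric space that admits an uncountable DC3-scrambled set. Distality guarantees $\liminf_{k\to\infty} d(F^k x_1, F^k x_2)>0$ for every pair of distinct points, and hence rules out any Li-Yorke pair automatically; the work lies in producing an uncountable family of pairs whose orbit-distance sequences fail to have a well-defined asymptotic density, so that $\Phi_{(x_1,x_2)}(\delta)<\Phi^*_{(x_1,x_2)}(\delta)$ on a nontrivial interval of $\delta$.

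Concretely, I would take $X$ to be a compact metric abelian group (for instance $\mathbb{T}^\infty$ or an inverse limit of torus extensions) and construct $F$ as a tower of skew-product rotations of the form introduced at the end of Section 2,
$$F(x_1,x_2,x_3,\dots)=\bigl(x_1+\alpha,\ x_2+\phi_2(x_1),\ x_3+\phi_3(x_1,x_2),\ \dots\bigr),$$
where each $\phi_n$ is a continuous cocycle into a compact abelian group. Since every fiber map is an isometric translation, $F$ is a tower of group extensions of a rotation, hence distal. For pairs agreeing in the first several coordinates, the orbit distance reduces to Birkhoff sums of differences of the cocycles; by choosing $\alpha$ of Liouville type and tuning the $\phi_n$ appropriately, I would arrange long blocks on which these Birkhoff sums stay close to distinct values, producing the strict oscillation $\Phi<\Phi^*$ on some interval $(a,b)$. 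Having produced one DC3 pair, the group translation symmetry of $X$ yields a continuum of such pairs whose orbit-distance sequences are parallel, and a standard Mycielski-style argument extracts a perfect uncountable subset of $X$ on which every pair of distinct points is DC3-scrambled.

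The main obstacle is the non-equidistribution step: minimal distal systems are frequently uniquely ergodic (all equicontinuous minimal systems and all nilsystems are), and unique ergodicity of the product system $F\times F$ restricted to the relevant orbit closure forces the empirical orbit-distance distributions to converge, which would immediately kill DC3. Overcoming this requires engineering enough irregularity in the Birkhoff sums of the cocycles, whether through Liouville-type approximations of $\alpha$ or by arranging non-unique ergodicity of $F\times F$ on an off-diagonal orbit closure, while still preserving the rigid tower-of-isometries structure that guarantees distality. Balancing this tension between the rigidity demanded by distality and the irregularity required for DC3 is the technical heart of the construction.
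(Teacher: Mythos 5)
Your overall strategy --- a distal tower of isometric extensions whose fiber rotations have Birkhoff sums that oscillate in density --- is the right shape, and it is essentially what the paper does (a circle-rotation skew product over the binary adding machine, taken from \cite{BSS}). But as written the proposal has two genuine gaps. First, the entire content of the theorem is the cocycle engineering that you explicitly defer to ``the technical heart'': one must actually exhibit a base and continuous fiber rotations for which the empirical distributions of the orbit distance fail to converge along a positive-density set of times, and then verify $\Phi<\Phi^*$ quantitatively. The paper does this with a combinatorial counting lemma (Lemma \ref{lem:freq}) showing that the partial rotation sums $\sum_{j=0}^{n-1}p(\tau^j\sigma\omeg)$ hit a prescribed value for a proportion at least $\alpha_i\to\alpha$ of the times $n\le 2^{m_i-2}$, and then plays two infinite families of scales against each other to force $\Phi^*\ge 2\alpha-1$ while $\Phi\le 2-2\alpha$, which is a genuine gap once $\alpha>\tfrac34$. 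Nothing in your outline substitutes for this step, and your own worry about unique ergodicity of the off-diagonal orbit closure is exactly the obstruction that this counting has to defeat.

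Second, and more seriously, your passage from one DC3 pair to an uncountable DC3-scrambled set does not work. Translating a single DC3 pair $(x,y)$ by group elements $g$ produces uncountably many DC3 \emph{pairs} $(x+g,y+g)$, but a DC3-scrambled set requires that \emph{every} pair of distinct points in the set be DC3, and the cross pairs $(x+g,x+g')$ for $g\neq g'$ need not be scrambled at all --- in an isometric tower they may stay at an essentially constant distance, giving $\Phi=\Phi^*$. A Mycielski-style extraction is also unavailable: Mycielski's theorem needs the set of scrambled pairs to be residual in $P\times P$ for some perfect set $P$, which in the chaos literature is obtained from transitivity or weak mixing --- precisely the properties a distal system lacks --- and the DC3 relation $\Phi<\Phi^*$ is a strict liminf-versus-limsup inequality whose residuality you have not established and cannot expect here. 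The paper instead builds the uncountable set by hand: it takes $S=\{(1,0)\}\times\sigma(\Lambda)$ where $\Lambda$ is an uncountable family of pairwise non-tail-equivalent sequences, and verifies the DC3 estimates directly for every pair, using non-tail-equivalence to guarantee that both parity classes $A$ and $B$ of indices are infinite. You would need an analogous direct verification for all pairs of your candidate set, not just for the translates of one pair.
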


Our proof analyzes a system which was constructed in \cite{BSS}, consisting of a continuous map $F$ on a compact space $M$ equipped with two different compatible metrics $\rho, \rho'$.  The authors of \cite{BSS} identify a pair of points $u,v\in M$ which is DC3 scrambled with respect to the metric $\rho$ but not with respect to $\rho'$.  Then the authors claim without proof that the map $F$ has no DC3 pairs with respect to the metric $\rho'$.  We challenge that claim.  We will show instead that $F$ has an uncountable DC3 scrambled set with respect to both metrics $\rho, \rho'$, provided that a certain model parameter is chosen sufficiently large.  This serves two purposes.  First, since $(M,F)$ is a distal dynamical system, it establishes our Theorem \ref{th:DC3LiY}.  Second, it motivates a correct proof that DC3 chaos is not conjugacy invariant, which is the content of Theorems \ref{ThPairs} and \ref{Th3} in the next section.

Let us recall the construction of the map $F$ from \cite{BSS}.  First, fix an arbitrary parameter $\alpha$ with $\frac12<\alpha<1$.  Then choose an increasing sequence of natural numbers $\left\{n_i\right\}_{i=1}^\infty$ such that
\begin{equation}\label{eq:limalpha}
\lim_{i\to\infty}\frac{(2^{n_1}-2)}{2^{n_1}} \cdot \frac{(2^{n_2}-4)}{2^{n_2}} \cdot \frac{(2^{n_3}-8)}{2^{n_3}} \cdot \ldots \cdot \frac{(2^{n_i}-2^i)}{2^{n_i}} = \alpha.
\end{equation}
We will also use the notation
\begin{equation*}
m_i := n_1 + n_2 + \ldots + n_i.
\end{equation*}
For the phase space take the Cartesian product $M=X\times \Omega$ where
\begin{equation*}
X=\left\{\left(\cos 2\pi\theta, \sin 2\pi\theta \right) ; \theta \in [0,1] \right\} \cup \left\{ (2,0) \right\} \subset \mathbb{R}^2, \quad \Omega = \left\{ 0, 1 \right\} ^ \mathbb{N}
\end{equation*}
The space $X$ is equipped with two different metrics.  The first is the standard Euclidean metric $\nu$ inherited from $\mathbb{C}$.  The second metric $\nu'$ is the same as $\nu$, except that $\nu'(x,(2,0))=1$ for any $x\neq(2,0)$; it corresponds to ``moving'' the isolated point of $X$ from $(2,0)$ (outside the unit circle) to $(0,0)$ (the center of the unit circle), so that it is equidistant from all other points.  The space $\Omega$ is equipped with a metric of pointwise convergence $\rho_\Omega$.  The precise choice of $\rho_\Omega$ is not fixed in \cite{BSS}, nor do we fix it here.  We only require that the diameter of $\Omega$ under $\rho_\Omega$ is at most $1$.  Finally, the space $M$ is given the two maximum metrics
\begin{equation*}
\begin{gathered}
\rho\left((x,\omeg),(x',\omeg')\right)=\max\left(\nu(x,x'),\rho_\Omega(\omeg,\omeg')\right)\\
\rho'\left((x,\omeg),(x',\omeg')\right)=\max\left(\nu'(x,x'),\rho_\Omega(\omeg,\omeg')\right).
\end{gathered}
\end{equation*}

Let $\tau:\Omega\to \Omega$ denote the binary adding machine
\begin{equation*}
\tau(\omeg)=\omeg+10000\ldots
\end{equation*}
with ``carrying'' to the right (for details, see for example \cite{Down}).  The map $F:M\to M$ is defined as a skew product.  The base space is $\Omega$ and in the fibers we rotate the unit circle by an angle depending on $\omeg$.  The formula for $F$ is
\begin{equation*}
F( \cos2\pi\theta,\sin2\pi\theta,\omeg)=( \cos2\pi(\theta + p(\omeg)),\sin2\pi(\theta+p(\omeg)), \tau(\omeg)), \quad F(2,0,\omeg) = (2,0,\tau(\omeg))
\end{equation*}
where the rotation angle $p(\omeg)$ is given by the following algorithm.  If $\omeg=1^\infty$, then set $p(\omeg)=0$.  Otherwise, decompose $\omeg$ into the infinite concatenation $\omeg=\omeg^{(1)}\omeg^{(2)}\omeg^{(3)}\cdots$ where each $\omeg^{(i)}$ is a block of length $n_i$.  To be clear, if we write $\omeg=(\omega_i)_{i=1}^\infty$, then $\omeg^{(i)}=\omega_{m_{i-1}+1}\omega_{m_{i-1}+2}\cdots\omega_{m_i}\in\{0,1\}^{n_i}$.  Then define 
\begin{equation*}
k=k(\omeg)=\min\left\{i;\,\omeg^{(i)}\neq1^{n_i}\right\}.
\end{equation*}
Although $k$ is a function of $\omeg$, we will sometimes suppress the argument for simplicity of notation.  By $|\omeg^{(k)}|$ we denote the \emph{evaluation} of the block $\omeg^{(k)}$, where the evaluation operator is defined by
\begin{equation}\label{eq:eval}
|x_1x_2x_3\cdots x_q|=x_1+2x_2+2^2x_3+\cdots+2^{q-1}x_{q}, \qquad q\in\mathbb{N}, x_1,\ldots,x_q\in\{0,1\}.
\end{equation}
Observe that $0\leq |\omeg^{(k)}| < 2^{n_k}-1$, since we know that $\omeg^{(k)}$ consists of $n_k$ symbols, but $\omeg^{(k)}\neq1^{n_k}$ by the definition of $k$.  Finally, set
\begin{equation}\label{eq:p}
p(\omeg) = \begin{cases} 0, & \textnormal{if } 2^{k-1} \leq |\omeg^{(k)}| < 2^{n_k}-2^{k-1}-1 \\ \frac{1}{2^k}, & \textnormal{otherwise} \end{cases}.
\end{equation}

It is proved in \cite{BSS} that the points $u=(-1,0,0^\infty)$, $v=(2,0,0^\infty)$ form a DC3 pair for $F$ with respect to the metric $\rho$ but not $\rho'$, and this proof is correct.  However, we challenge the subsequent claim (asserted without proof in \cite{BSS}) that $F$ has no DC3 pairs with respect to $\rho'$.  We will prove instead the following claim, which establishes Theorem \ref{th:DC3LiY}.

\begin{claim}
Fix $\alpha>\frac34$ and let the map $F:M\to M$ and the metrics $\rho, \rho'$ be as constructed above.  Then there is an uncountable set $S\subset M$ which is pairwise DC3 scrambled with respect to both $\rho$ and $\rho'$.
\end{claim}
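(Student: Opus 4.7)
Since the two metrics $\rho$ and $\rho'$ differ only in how they treat the isolated point $(2,0) \in X$, they agree on any pair of points in $X_0 \times \Omega$, where $X_0 := X \setminus \{(2,0)\}$ denotes the unit circle. Hence it suffices to construct an uncountable set $S \subset X_0 \times \Omega$ which is pairwise DC3 with respect to $\rho$; such an $S$ is automatically DC3 with respect to $\rho'$ as well. Fix $x_0 := (-1,0) \in X_0$ and set $z_\omega := (x_0, \omega)$ for $\omega \in \Omega$. Writing $\phi(N, \omega) := \sum_{j=0}^{N-1} p(\tau^j \omega)$ for the cumulative rotation angle, one computes
$$\rho\bigl(F^N z_{\omega_1}, F^N z_{\omega_2}\bigr) = \max\bigl(\,2|\sin \pi D(N)|,\ c\,\bigr), \qquad D(N) := \phi(N,\omega_2) - \phi(N,\omega_1) \pmod 1,$$
where $c := \rho_\Omega(\omega_1, \omega_2) > 0$ is independent of $N$ because the adding machine $\tau$ preserves common-prefix length in $\Omega$. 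Thus DC3 for $(z_{\omega_1}, z_{\omega_2})$ reduces to finding an interval $(a,b)$ with $a \geq c$ on which the upper and lower distributions of the scalar $D(N) \pmod 1$ separate.

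The set $S$ is built block-by-block from binary sequences. For each $\eta \in \{0,1\}^{\mathbb{N}}$ let $\omega_\eta := \omega_\eta^{(1)} \omega_\eta^{(2)} \cdots$, where $\omega_\eta^{(i)}$ is a fixed \emph{neutral} length-$n_i$ block with $2^{i-1} \leq |\omega_\eta^{(i)}| < 2^{n_i} - 2^{i-1} - 1$ if $\eta_i = 0$, and a fixed \emph{shifting} length-$n_i$ block with $|\omega_\eta^{(i)}| \in \{0, \ldots, 2^{i-1} - 1\}$ if $\eta_i = 1$. Set $S := \{z_{\omega_\eta} : \eta \in \{0,1\}^{\mathbb{N}}\}$. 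The role of $\alpha > \tfrac34$ appears via the ``generic-everywhere'' time set $G(\omega) := \{N : \tau^N \omega \text{ is non-bad in every block}\}$. Unique ergodicity of $\tau$ applied to the clopen cylinders $A_i := \{\omega : \omega^{(i)}\text{ non-bad}\}$, combined with the tail estimate $\sum_{i > K} \mathrm{Pr}(A_i^c) \to 0$ (valid since $\alpha > 0$ forces $\sum 2^i/2^{n_i} < \infty$), shows by a $\liminf/\limsup$ sandwich that $G(\omega)$ has density exactly $\alpha$ for \emph{every} $\omega \in \Omega$. By inclusion--exclusion, the intersection $G(\omega_\eta) \cap G(\omega_{\eta'})$ has lower density $\geq 2\alpha - 1 > \tfrac12$ for any $\eta, \eta'$, and on this intersection both $p$-values vanish so $D(N+1) = D(N)$.

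This locking forces the upper distribution $\Phi^*_{(z_{\omega_\eta}, z_{\omega_{\eta'}})}(\delta) \geq 2\alpha - 1 > \tfrac12$ for every $\delta > c$, since $D$ is pinned at a single value on a set of density exceeding $\tfrac12$. For the lower distribution, the asymmetric block pattern $\eta \oplus \eta'$ guarantees, at every index $i$ where $\eta_i \neq \eta'_i$, a positive block-periodic density of ``drift'' times on which $D$ jumps by $\pm 1/2^i \pmod 1$; an epoch-scale analysis over windows of length $2^{m_i}$ shows that the cumulative drift is bounded away from $0 \pmod 1$ on a subset of lower density strictly less than $2\alpha - 1$. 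Combining the two estimates produces a non-degenerate interval of $\delta$-values above $c$ on which $\Phi < \Phi^*$, establishing DC3 for the pair.

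The main obstacle is this final drift bound for the lower distribution: showing that the shifting-block contributions move $D(N)$ strictly off the origin modulo $1$ with uniform positive density for every pair $\eta \neq \eta'$. This is where the threshold $\alpha > \tfrac34$ rather than the original $\alpha > \tfrac12$ becomes essential: $\alpha > \tfrac12$ yields only $2\alpha - 1 > 0$, which is too weak to separate $\Phi$ from $\Phi^*$, whereas the extra margin $2\alpha - 1 > \tfrac12$ is precisely the buffer needed so that the drift contribution cannot raise $\Phi$ all the way up to $\Phi^*$.
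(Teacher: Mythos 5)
Your overall strategy is recognizably the paper's: restrict to points on the unit circle so that $\rho$ and $\rho'$ agree along orbits, show that the net rotation difference $D(N)$ is pinned on time sets of density at least $2\alpha-1>\tfrac12$, and use the margin coming from $\alpha>\tfrac34$ to separate $\Phi$ from $\Phi^*$. But there are two genuine gaps. First, your set $S$ is indexed by \emph{all} of $\{0,1\}^{\mathbb N}$, so it contains pairs $\eta\neq\eta'$ that differ in only finitely many coordinates; for such a pair the two sequences $\omega_\eta,\omega_{\eta'}$ are tail-equivalent, the ``drift'' occurs in only finitely many blocks, and the alternation your argument needs --- $D$ pinned at $0$ along one subsequence of epochs and at $\tfrac12$ along another --- fails. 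The paper avoids exactly this by choosing $\Lambda\subset\Omega$ with no two members tail-equivalent (via the map $\lambda$); that is what forces both the ``agreeing-parity'' set $A$ and the ``disagreeing-parity'' set $B$ to be infinite, which in turn yields the $\Phi^*$ bound along $A$ and the $\Phi$ bound along $B$.

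Second, and more seriously, your upper-distribution step is not valid as stated: knowing that $D(N)$ is \emph{constant} on a set of density exceeding $\tfrac12$ proves nothing unless you also identify the constant. If $D$ is pinned at $\tfrac12$ the orbit points are antipodal, the distance is $2$, and the same density estimate pushes $\Phi^*$ \emph{down} rather than up. In fact the pinned value is not a single number for all large $N$; it is the epoch-dependent quantity $r_i-r_i'=\tfrac12\sum_{l\le i}(\omega'_l-\omega_l)\bmod 1$, and the entire combinatorial content of the paper's Lemma~\ref{lem:freq} --- counting ``good words'' in each of the first $i$ blocks to show the cumulative rotation equals $r_i$ for at least $\alpha_i\cdot 2^{m_i-2}$ of the times $n\le 2^{m_i-2}$ --- is what controls that value. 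You explicitly defer this (``the main obstacle is this final drift bound''), but it is the heart of the proof, not a finishing touch. Two smaller problems: ``good'' in block $l$ means the cumulative contribution $\varphi_l$ hits a target value, not that $p(\tau^N\omega)=0$, so your inference that $D(N+1)=D(N)$ on $G(\omega_\eta)\cap G(\omega_{\eta'})$ rests on a misreading; and your formula $\rho=\max\bigl(2|\sin\pi D(N)|,c\bigr)$ with $c$ independent of $N$ assumes $\rho_\Omega$ depends only on the position of first disagreement, which the paper does not assume --- it sidesteps the issue by locating the separation in the interval $1<\delta<2$, above $\operatorname{diam}\Omega$.
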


Our proof will make use of the map $\sigma:\Omega\to\Omega$ given by the formula
\begin{equation*}
\sigma(\omega_1\omega_2\omega_3\cdots) = 0^{n_1-1}\omega_1 \, 0^{n_2-1}\omega_2 \,0^{n_3-1}\omega_3 \, \cdots.
\end{equation*}

\begin{lemma}\label{lem:freq}
For each $\omeg\in\Omega$ and $i\in\mathbb{N}$ we have the following estimate regarding net rotations along the orbit of $\sigma(\omeg)$
\begin{multline*}
\# \left\{ n\in\{1,\ldots,2^{m_i-2}\} ;\, \sum_{j=0}^{n-1} p(\tau^j\sigma\omeg) = \frac12 \Big((1-\omega_1)+\cdots+(1-\omega_i)\Big) \mod 1\right\} \geq \\
\geq \left(2^{n_i-2}-2^{i-1}-1\right) \cdot \prod_{l=1}^{i-1} \left(2^{n_l}-2^l\right)
\end{multline*}
\end{lemma}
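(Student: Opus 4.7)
The plan is to reduce the target congruence to a joint counting problem and then produce enough valid $n$ by parameterizing over block values. A key preliminary observation is that for $n \le 2^{m_i-2}$, the first $m_i$ bits of $\tau^j \sigma\omega$ coincide with the binary expansion of $s+j$, where $s = \sum_l \omega_l \cdot 2^{m_l-1}$. Assuming $n_i \geq 4$, one checks that $s+j < 2^{m_i}-2^{m_{i-1}}$ on the whole range, so the $i$-th block of $\tau^j \sigma\omega$ never equals $1^{n_i}$. Therefore $p(\tau^j \sigma\omega) \in \{0, 1/2, 1/4, \ldots, 1/2^i\}$ throughout, and writing $N_k(n)$ for the number of $j<n$ with $p(\tau^j\sigma\omega)=1/2^k$, the sum becomes
\[
\sum_{j=0}^{n-1} p(\tau^j \sigma\omega) = \sum_{k=1}^i \frac{N_k(n)}{2^k}.
\]
The desired congruence modulo $1$ is equivalent to a combined condition on the residues $N_k(n) \bmod 2^k$.

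I would then parameterize candidate $n$'s block-by-block. For each block $l<i$ choose the target value of the $l$-th block of $\tau^n \sigma\omega$ from a safe subset $A_l \subseteq \{0,1,\ldots,2^{n_l}-1\}$ of size $2^{n_l}-2^l$: non-exceptional values together with one adjacent boundary value whose contribution can be absorbed by a compensating choice elsewhere. For the $i$-th block, the chosen value must lie in a safe subrange of size $2^{n_i-2}-2^{i-1}-1$, arranged so that the relevant $i$-th block values stay non-exceptional throughout the $n$-interval. Product-counting these choices yields exactly the claimed lower bound $(2^{n_i-2}-2^{i-1}-1)\prod_{l=1}^{i-1}(2^{n_l}-2^l)$.

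It remains to verify that each safe tuple produces an $n$ with $S_n$ lying in the required congruence class. The key technical ingredient is that over a complete cycle of length $2^{m_l}$ of the first $l$ blocks, $N_l$ advances by exactly $2^l$, contributing an integer multiple of $1/2^{l-1}$ (hence $0 \bmod 1$) to $\sum p$ at each level; so $N_l(n) \bmod 2^l$ is determined by the partial cycle at the end of the $n$-interval. The initial phase $s$ produced by $\sigma$ places each $\omega_l$ at position $m_l$, shifting the moment at which the $l$-th block enters its exceptional sub-intervals by an amount depending on whether $\omega_l=0$ or $1$. Telescoping these shifts over $l=1,\ldots,i$ identifies the cumulative phase with $\frac12\sum_l(1-\omega_l) \pmod 1$. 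I expect the main technical obstacle to lie here: the adding-machine carries couple the passages of different blocks through their exceptional sets, so the safe-tuple construction must be chosen with care to decouple the contributions modulo each $2^l$ and match the target residue exactly level-by-level.
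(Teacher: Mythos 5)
Your overall strategy coincides with the paper's: decompose the net rotation into per-block contributions (your $N_l(n)/2^l$ is exactly the paper's $\varphi_l(n,\sigma\omega)$), observe that each contribution mod $1$ is determined by the terminal state of block $l$, and then product-count over admissible block values. However, the proposal stops short of the proof at its two essential points. First, you explicitly defer the verification that each ``safe tuple'' lands in the target congruence class, and you flag the coupling of blocks through adding-machine carries as an unresolved obstacle. In fact there is no coupling to resolve: the steps $j$ with $k(\tau^j\sigma\omega)=l$ occur exactly when blocks $1,\dots,l-1$ are all ones, and between consecutive such steps block $l$ advances by one; hence $N_l(n)$ equals the number of exceptional block-$l$ values traversed from the initial value $0^{n_l-1}\omega_l$ to the block-$l$ value of $\tau^n\sigma\omega$, and so depends only on that terminal value. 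Working this out case by case (the content of the paper's Figure 1) shows that the admissible (``good'') set for block $l$ is $\{v:\,2^{l-1}\le |v|\le 2^{n_l}-2^{l-1}-1\}$ for both $\omega_l=0$ and $\omega_l=1$, of cardinality $2^{n_l}-2^l$; your description of $A_l$ as ``non-exceptional values together with one adjacent boundary value whose contribution can be absorbed by a compensating choice elsewhere'' happens to name the right set but for the wrong reason --- no compensation occurs anywhere, and the correct criterion is a congruence on the number of exceptional values traversed, not non-exceptionality of the terminal value.

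Second, and more seriously, the product count is not justified. The truncations of $\tau^1\sigma\omega,\dots,\tau^{2^{m_i-2}}\sigma\omega$ to the first $m_i$ coordinates form an \emph{interval} of $2^{m_i-2}$ consecutive configurations in the evaluation order, not the full product $\{0,1\}^{n_1}\times\cdots\times\{0,1\}^{n_i}$, so one cannot freely and independently prescribe the terminal value of every block. The paper closes this gap by exhibiting an explicit sub-interval of the orbit segment on which blocks $1,\dots,i-1$ do range over all combinations while block $i$ ranges over a set still containing at least $2^{n_i-2}-2^{i-1}-1$ good values; this is precisely where the placement of $\omega_i$ in the last coordinate of block $i$ (via $\sigma$) and the restriction to $n\le 2^{m_i-2}$ produce the smaller factor for block $i$. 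Without this step your argument yields at best a heuristic count, not the stated lower bound.
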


\begin{proof}
We introduce the following functions
\begin{equation}\label{eq:phil}
\varphi_l(n,\sigma\omeg) = \sum_{\substack{j;0\leq j\leq n-1, \\ k(\tau^j\sigma\omeg)=l}} p(\tau^j\sigma\omeg)
\end{equation}
Thus, $\varphi_l$ counts the contribution to the net rotation due to block $l$.  We have
\begin{equation*}
\sum_{j=0}^{n-1} p(\tau^j\sigma\omeg) = \sum_{l=1}^\infty \varphi_l(n,\sigma\omeg), \quad n\in\mathbb{N}.
\end{equation*}
Notice that $\tau^{2^{m_i-2}}$ represents addition by $000\cdots010\cdots$, where the $1$ appears in the next to last coordinate of block $i$.  Thus, if $n\leq2^{m_i-2}$, then for all $j<n$ there is still a zero in the $i$th block of $\tau^j\sigma\omeg$ so that $k(\tau^j\sigma\omeg)\leq i$.  Therefore we only need to add together contributions from blocks $1$ through $i$,
\begin{equation*}
\sum_{j=0}^{n-1} p(\tau^j\sigma\omeg) = \sum_{l=1}^i \varphi_l(n,\sigma\omeg), \quad n\in\{1,\ldots,2^{m_i-2}\}.
\end{equation*}
Comparing definitions \eqref{eq:p} and \eqref{eq:phil}, we find that we can evaluate $\varphi_l(n,\sigma\omeg)$ by looking up the word appearing in the $l$th block of $\tau^n\sigma\omeg$ on one of the tables in Figure \ref{fig:tablephil}.  We will say that the word in block $l$ is a \emph{good word} if $\varphi_l(n,\sigma\omeg)=\frac12(1-\omega_l) \mod 1$.  If $\tau^n\sigma\omeg$ contains good words in each of the blocks $1$ through $i$, then we obtain the desired sum $\sum_{j=0}^{n-1} p(\tau^j\sigma\omeg) = \frac12 ((1-\omega_1)+\cdots+(1-\omega_i)) \mod 1$.  How many ways are there to choose good words in all blocks?

\begin{figure}
\begin{gather*}\def\arraystretch{1.1} \arraycolsep=1em
\begin{array}{cccc}
\multicolumn{4}{c}{\textnormal{\emph{Use this table if }}\omega_l=0} \\
\toprule
\textnormal{Block } l \textnormal{ from } \tau^n\sigma\omeg & \textnormal{Evaluation} & \varphi_l(n,\sigma\omeg) \textnormal{ mod }1 \\
\midrule
00\cdots0 - 000\cdots00 & 0 & 0 & \multirow{5}{*}{\rotatebox[origin=c]{-90}{Bad}} \\
10\cdots0 - 000\cdots00 & 1 & \frac{1}{2^l} \\
01\cdots0 - 000\cdots00 & 2 & \frac{2}{2^l} \\
 \vdots&\vdots&\vdots\\
11\cdots1 - 000\cdots00 & 2^{l-1}-1 & \frac{2^{l-1}-1}{2^l} \\
\midrule
00\cdots0 - 100\cdots00 & 2^l & \frac{1}{2} & \multirow{3}{*}{\rotatebox[origin=c]{-90}{Good}}\\
 \vdots&\vdots&\vdots\\
11\cdots1 - 011\cdots11 & 2^{n_l}-2^{l-1}-1 & \frac{1}{2} \\
\midrule
00\cdots0 - 111\cdots11 & 2^{n_l}-2^{l-1} & \frac{2^{l-1}+1}{2^l} & \multirow{4}{*}{\rotatebox[origin=c]{-90}{Bad}} \\
 \vdots&\vdots&\vdots\\
01\cdots1 - 111\cdots11 & 2^{n_l}-2 & \frac{2^l-1}{2^l} \\
11\cdots1 - 111\cdots11 & 2^{n_l}-1 & 1 \\
\bottomrule
\end{array}
\\[2em]
\def\arraystretch{1.1} \arraycolsep=1em
\begin{array}{cccc}
\multicolumn{4}{c}{\textnormal{\emph{Use this table if }}\omega_l=1} \\
\toprule\textnormal{Block } l \textnormal{ from } \tau^n\sigma\omeg & \textnormal{Evaluation} & \varphi_l(n,\sigma\omeg) \textnormal{ mod }1 \\
\toprule
00\cdots0 - 000\cdots01 & 2^{n_l-1} & 0 & \multirow{4}{*}{\rotatebox[origin=c]{-90}{Good}}\\
10\cdots0 - 000\cdots01 & 2^{n_l-1}+1 & 0 \\
 \vdots&\vdots&\vdots\\
11\cdots1 - 011\cdots11 &  2^{n_l}-2^{l-1}-1 & 0 \\
\midrule
00\cdots0 - 111\cdots11 & 2^{n_l}-2^{l-1} & \frac{1}{2^l} & \multirow{9}{*}{\rotatebox[origin=c]{-90}{Bad}}\\
10\cdots0 - 111\cdots11 & 2^{n_l}-2^{l-1}+1 & \frac{2}{2^l} \\
\vdots & \vdots & \vdots \\
01\cdots1 - 111\cdots11 & 2^{n_l}-2 & \frac{2^{l-1}-1}{2^l} \\
11\cdots1 - 111\cdots11 & 2^{n_l}-1 & \frac{1}{2} \\
00\cdots0 - 000\cdots00 & 0 & \frac{1}{2} \\
10\cdots0 - 000\cdots00 & 1 & \frac{2^{l-1}+1}{2^l} \\
\vdots & \vdots & \vdots \\
11\cdots1 - 000\cdots00 & 2^{l-1}-1 & \frac{2^l-1}{2^l} \\
\midrule
00\cdots0 - 100\cdots00 & 2^{l-1} & 1 & \multirow{4}{*}{\rotatebox[origin=c]{-90}{Good}} \\
10\cdots0 - 100\cdots00 & 2^{l-1}+1 & 1 \\
\vdots & \vdots & \vdots \\
11\cdots1 - 111\cdots10 & 2^{n_l-1}-1 & 1 \\
\bottomrule
\end{array}
\end{gather*}
\caption{Table of values for $\varphi_l(n,\sigma\omeg)$.   Each word appearing here has length $n_l$.  The first $l-1$ digits of the word have been typographically separated from the remaining $n_l-l+1$ digits by a hyphen.}
\label{fig:tablephil}
\end{figure}

Take the set $\left\{\tau^n(\sigma\omeg)\right\}_{n=1}^{2^{m_i-2}}$ and truncate each member of this set to the first $i$ blocks.  We obtain in this way the set of all $x\in\{0,1\}^{m_i}$ in the interval
\begin{equation*}
\overbrace{0\ldots0\omega_1}^\text{Block 1} \;
\overbrace{0\ldots0\omega_2}^\text{Block 2}  \;
\cdots \;
\overbrace{0\ldots00\omega_i}^\text{Block i}
< x \leq 
\overbrace{0\ldots0\omega_1}^\text{Block 1} \;
\overbrace{0\ldots0\omega_2}^\text{Block 2} \;
\cdots \;
\overbrace{0\ldots01\omega_i}^\text{Block i},
\end{equation*}
where the order relation corresponds to the temporal ordering of our orbit segment; we may express this explicitly in terms of the evaluation operator \eqref{eq:eval} with the rule that $x<y$ whenever $|x| < |y|$.  Unfortunately, this interval does not contain arbitrary combinations of words in the first $i$ blocks.  It does, however, contain the subinterval
\begin{equation*}
\overbrace{00\ldots0}^\text{Block 1} \;
\overbrace{00\ldots0}^\text{Block 2} \;
\cdots \;
\overbrace{10\ldots00\omega_i}^\text{Block i}
\leq x \leq 
\overbrace{11\ldots1}^\text{Block 1} \;
\overbrace{11\ldots1}^\text{Block 2} \;
\cdots \;
\overbrace{11\ldots10\omega_i}^\text{Block i}.
\end{equation*}
This interval contains words from the set $\left\{ x\in\{0,1\}^{n_i} ;\, 10\ldots00\omega_i \leq x \leq 11\ldots10\omega_i\right\}$ in block $i$ combined with arbitrary words in blocks $1$ through $i-1$.  Counting the number of good words available in each block and multiplying completes the proof of the lemma.

\end{proof}

\begin{proof}[Proof of Theorem \ref{th:DC3LiY}]
Equip $\Omega$ with the tail equivalence relation
\begin{equation*}
(\omega_i)_{i=1}^\infty \thicksim (\omega'_i)_{i=1}^\infty \quad \textnormal{iff} \quad \exists n\in\mathbb{N} : \omega_n\omega_{n+1}\cdots = \omega'_n\omega'_{n+1}\cdots.
\end{equation*}
We wish to choose an uncountable set $\Lambda\subset\Omega$ such that no two points of $\Lambda$ are tail equivalent.  One quick solution is to invoke the axiom of choice and let $\Lambda$ consist of one representative point from each equivalence class of $\thicksim$.  Alternatively, we may follow a more constructive approach and take $\Lambda = \lambda\left(\{0,1\}^\mathbb{N}\right)$, where
\begin{equation*}
\lambda(\omega_1\omega_2\omega_3\cdots) = \omega_1 \ \omega_1\omega_2 \ \omega_1\omega_2\omega_3 \ \omega_1\omega_2\omega_3\omega_4 \  \cdots,
\end{equation*}
so that $\lambda(\omeg)\not\thicksim\lambda(\omeg')$ for $\omeg\neq\omeg'$.

We claim that the uncountable set $S=\{(1,0)\}\times\sigma(\Lambda) \subset M$ is pairwise DC3 scrambled for the map $F$ with respect to both metrics $\rho, \rho'$.  In what follows we will make calculations only with the metric $\rho$, because for all $s,s'\in S, n\in\mathbb{N}$ there is equality between the metrics, $\rho(F^ns,F^ns')=\rho'(F^ns,F^ns')$.

Let $s,s'\in S$ be distinct.  Such points must be of the form $s=(1,0,\sigma\omeg)$, $s'=(1,0,\sigma\omeg')$, where $\omeg, \omeg'$ are distinct points of $\Lambda$.  We have
\begin{equation*}
\begin{gathered}
F^n(s) = (\cos2\pi\sum_{j=0}^{n-1} p(\tau^j\sigma\omeg),\sin2\pi\sum_{j=0}^{n-1} p(\tau^j\sigma\omeg),\tau^n\sigma\omeg) \\
F^n(s') = (\cos2\pi\sum_{j=0}^{n-1} p(\tau^j\sigma\omeg'),\sin2\pi\sum_{j=0}^{n-1} p(\tau^j\sigma\omeg'),\tau^n\sigma\omeg').
\end{gathered}
\end{equation*}
The distance between $F^n(s)$, $F^n(s')$ is small whenever the points are on the same side of the circle, and large whenever the points are on opposite sides of the circle.  We state this notion precisely in the following implications:
\begin{multline}\label{eq:sameside}
\sum_{j=0}^{n-1} p(\tau^j\sigma\omeg) = \sum_{j=0}^{n-1} p(\tau^j\sigma\omeg') \mod 1 \implies \\
\rho(F^n(s),F^n(s')) = \max\{0,\rho_\Omega(\tau^n\sigma\omeg,\tau^n\sigma\omeg')\} \leq \textnormal{diam }\Omega \leq 1,
\end{multline}
\begin{multline}\label{eq:oppside}
\sum_{j=0}^{n-1} p(\tau^j\sigma\omeg) - \sum_{j=0}^{n-1} p(\tau^j\sigma\omeg') = \frac12 \mod 1 \implies \\
\rho(F^n(s),F^n(s')) = \max\{2,\rho_\Omega(\tau^n\sigma\omeg,\tau^n\sigma\omeg')\} = 2.
\end{multline}

Now partition the natural numbers into the sets
\begin{equation*}
\begin{aligned}
A&=\left\{ i\in\mathbb{N} ;\, \omega_1 + \cdots + \omega_i = \omega'_1 + \cdots + \omega'_i \mod 2 \right\}, \\
B&=\left\{ i\in\mathbb{N} ;\, \omega_1 + \cdots + \omega_i \neq \omega'_1 + \cdots + \omega'_i \mod 2 \right\}.
\end{aligned}
\end{equation*}
If $i\in A$ and $\omega_{i+1}\neq\omega'_{i+1}$, then $i+1\in B$.  Conversely, if $i\in B$ and $\omega_{i+1}\neq\omega'_{i+1}$, then $i+1\in A$.  Since $\omeg \not\thicksim \omeg'$, it follows that both sets $A,B$ are infinite.

To simplify the notation, we define for all $i \in \mathbb{N}$ the numbers
\begin{equation*}
\begin{gathered}
r_i=\frac12((1-\omega_1)+\cdots+(1-\omega_i)) \mod 1\\ 
r'_i = \frac12((1-\omega'_1)+\cdots+(1-\omega'_i)) \mod 1\\
\alpha_i = \frac{(2^{n_i-2}-2^{i-1}-1)\cdot \prod_{l=1}^{i-1}(2^{n_l}-2^l)}{2^{m_i-2}}.
\end{gathered}
\end{equation*}
In the expression for $\alpha_i$ if we multiply the numerator and denominator by $4$ and rearrange terms, we find
\begin{equation*}
\alpha_i = \frac{\prod_{l=1}^i(2^{n_l}-2^l)}{2^{m_i}}-\frac{2^i+4}{2^{n_i}}\cdot\frac{\prod_{l=1}^{i-1}(2^{n_l}-2^l)}{2^{m_{i-1}}}, \quad i\in\mathbb{N}
\end{equation*}
Since $n_i$ is strictly increasing in $i$, it follows that $n_i-i$ is nondecreasing.  Moreover, $n_i-i \to \infty$, for otherwise, $n_i-i$ would be eventually constant, which contradicts \eqref{eq:limalpha}.  Now we may take a limit and conclude that $\alpha_i \to \alpha$.

Lemma \ref{lem:freq} gives us the following minimum densities
\begin{equation}\label{eq:recaplemma}
\begin{gathered}
\frac{1}{2^{m_i-2}}\cdot\#\Big\{n\in\{1,\ldots,2^{m_i-2}\};\, \sum_{j=0}^{n-1}p(\tau^j\sigma\omeg)=r_i \mod 1\Big\} \geq \alpha_i \\
\frac{1}{2^{m_i-2}}\cdot\#\Big\{n\in\{1,\ldots,2^{m_i-2}\};\, \sum_{j=0}^{n-1}p(\tau^j\sigma\omeg')=r'_i \mod 1\Big\} \geq \alpha_i.
\end{gathered}
\end{equation}

Suppose that $i\in A$.  By the definition of $A$ we have $r_i=r'_i \mod 1$.  It follows from \eqref{eq:sameside} and \eqref{eq:recaplemma} that
\begin{equation*}
\frac{1}{2^{m_i-2}}\cdot\#\Big\{n\in\{1,\ldots,2^{m_i-2}\};\, \rho(F^n(s),F^n(s')) < \delta \Big\} \geq 2\alpha_i - 1, \quad \delta>1.
\end{equation*}
Taking the limit as $i\to\infty$, $i\in A$, we obtain an estimate for the upper distribution function
\begin{equation*}
\Phi_{(s,s')}^*(\delta) \geq 2\alpha - 1, \quad \delta>1.
\end{equation*}

Now suppose that $i\in B$.  By the definition of $B$ we have $r_i-r'_i = \frac12 \mod 1$.  It follows from \eqref{eq:oppside} and \eqref{eq:recaplemma} that
\begin{equation*}
\frac{1}{2^{m_i-2}}\cdot\#\Big\{n\in\{1,\ldots,2^{m_i-2}\};\, \rho(F^n(s),F^n(s')) < \delta \Big\} \leq 1 - (2\alpha_i - 1), \quad \delta<2.
\end{equation*}
Taking the limit as $i\to\infty$, $i\in B$, we obtain an estimate for the lower distribution function
\begin{equation*}
\Phi_{(s,s')}(\delta) \leq 2 - 2\alpha, \quad \delta<2.
\end{equation*}

In the hypotheses of the theorem, we fixed $\alpha>\frac34$.  It follows that 
\begin{equation*}
\Phi_{(s,s')}(\delta) \leq 2-2\alpha < \frac12 < 2\alpha-1 \leq \Phi_{(s,s')}^*(\delta), \quad 1<\delta<2.
\end{equation*}
Thus $s,s'$ are a DC3 pair.  This completes the proof.
\end{proof}


\section{Conjugacy problem}
 We show that DC3 is not preserved by conjugacy, i.e. distributional chaos of type 3 can be destroyed (or created) by using a conjugating homeomorphism. In Theorem~\ref{ThPairs}, we will assume that distributional chaos means the existence of a chaotic pair and we construct a DC3 system which is conjugated to a dynamical system without any distributionally scrambled pair. This constitutes the first correct proof of \cite[Theorem 2]{BSS}. In Theorem~\ref{Th3}, we will assume that distributional chaos means the existence of an uncountable chaotic set and we present a DC3 system which is conjugated to a dynamical system with only DC3 pairs (the maximum cardinality of any distributionally scrambled set in this system is $2$).\\

In the following lemma, $\rho$ will be the circle metric given by $\rho(\phi_a,\phi_b)= \min\{ |\phi_a-\phi_b|, 1-|\phi_a-\phi_b|\}$.
\begin{lemma}\label{LemPi}
Suppose we are given two angles $\phi_x , \phi_y \in [0,1]$ (mod 1), a natural number $k$ and a number $\delta \in (0,0.5)$. Let  $\pi_k = \pi_k(\phi_x, \phi_y, \delta) := \# \{ 0 \le i \le k-1; \rho(\phi_x + \frac{i}{k}, \phi_y)< \delta\}$. Then 
$$2\delta k- 1 \le \pi_k \le 2\delta k + 1$$
\end{lemma}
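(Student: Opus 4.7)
The plan is to translate the problem into a standard lattice-point count on the real line. Identify the circle $\mathbb{R}/\mathbb{Z}$ with the fundamental domain $[0,1)$; then the $k$ values $\phi_x+i/k$ for $i=0,\ldots,k-1$ are exactly the elements of the coset $\phi_x+\frac{1}{k}\mathbb{Z}$ reduced mod $1$, so they form an arithmetic progression on the circle with consecutive spacing exactly $1/k$. The set $\{t:\rho(t,\phi_y)<\delta\}$ is the open arc $J$ of length $2\delta$ centered at $\phi_y$. Because the hypothesis $\delta<\tfrac12$ guarantees $|J|<1$, the arc does not wrap around the circle, and we may lift $J$ to an open interval $\widetilde J\subset\mathbb{R}$ of length $2\delta$; then $\pi_k$ equals the number of points of the lattice $\phi_x+\tfrac1k\mathbb{Z}$ lying in $\widetilde J$.

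For the upper bound, assume $\pi_k\ge 1$ (the case $\pi_k=0$ is immediate). Let $p_1<p_2<\cdots<p_{\pi_k}$ be the lattice points in $\widetilde J$. Consecutive lattice points are at distance exactly $1/k$, so $p_{\pi_k}-p_1=(\pi_k-1)/k$, and since both lie in the open interval $\widetilde J$ of length $2\delta$, we obtain $(\pi_k-1)/k\le 2\delta$, which rearranges to $\pi_k\le 2\delta k+1$.

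For the lower bound, the next lattice point to the left of $p_1$ is $p_1-1/k$ and the next to the right of $p_{\pi_k}$ is $p_{\pi_k}+1/k$; by definition neither lies in $\widetilde J$, so $\widetilde J\subset(p_1-1/k,\,p_{\pi_k}+1/k)$, an interval of length $(\pi_k+1)/k$. Consequently $2\delta\le(\pi_k+1)/k$, i.e.\ $\pi_k\ge 2\delta k-1$. The case $\pi_k=0$ fits into the same scheme: then $\widetilde J$ lies in a single open gap of length $1/k$ between consecutive lattice points, so $2\delta\le 1/k$ and $2\delta k-1\le 0=\pi_k$.

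The only real subtlety is avoiding wrap-around, which is precisely what the hypothesis $\delta\in(0,0.5)$ ensures; once the counting is transferred to the real line, the estimates are elementary and symmetric, yielding the two inequalities together.
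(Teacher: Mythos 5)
Your proposal is correct and follows essentially the same route as the paper: both unroll the circle onto the real line (legitimate because $2\delta<1$ prevents wrap-around) and reduce the claim to counting points of an evenly spaced lattice in an open interval, which after rescaling is the statement that an interval of length $2\delta k$ contains between $2\delta k-1$ and $2\delta k+1$ integers. The only difference is that you spell out the elementary lattice-point count (including the $\pi_k=0$ case) that the paper simply asserts.
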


\begin{proof}
The reader may picture $k$ points equally spread around the circle $[0,1]$ (mod 1) and an arc $(\phi_y - \delta, \phi_y + \delta)$ of radius $\delta \in (0,1/2)$.  Without loss of generality we will take $\phi_x=0$. Then $\pi_k = \# \{ 0 \le i \le k-1; \rho(\frac{i}{k}, \phi_y)< \delta\} $ is equal to $ \# \{ i \in \mathbb Z; | \frac{i}{k} - \phi_y|< \delta\} = \# \{ i \in \mathbb Z; | i - \phi_y|< \delta k\} $. By these equalities we moved the problem from the circle to the real line and so the question is: How many integers lie in the open interval $(\phi_y - \delta k, \phi_y + \delta k)$ with length $ 2\delta k$? At least $2\delta k -1$ and at most $2\delta k +1$.
\end{proof}

\begin{theorem}\label{ThPairs}
The existence of DC3 pairs is not preserved by topological conjugacy.
\end{theorem}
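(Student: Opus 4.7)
The plan is to produce a topological dynamical system $(M,F)$ together with two compatible metrics $d_1, d_2$ on $M$ such that $F$ admits a DC3 pair with respect to $d_1$ but no distributionally scrambled pair with respect to $d_2$. Since the identity map $\mathrm{id} \colon (M,d_1) \to (M,d_2)$ is then a topological conjugacy between the two metric realizations of $(M, F)$, this suffices for the theorem.

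I would construct $F$ as a skew product over the adic adding machine $\tau$ on $\Omega = \{0,1\}^\mathbb{N}$, with fiber a circle augmented by an isolated point, in the spirit of the BSS construction recalled in Section 3. Under $d_1$, a distinguished pair of orbits (analogous to the $(u,v)$ pair of BSS) will have their mutual distances oscillating between two well-separated values at prescribed positive frequencies, producing DC3 behavior. For the second metric $d_2$, the isolated point will be moved to be equidistant from every other point (as in $\rho'$ from Section 3), which eliminates the oscillation of the BSS pair; in addition, the fiber rotation angles $p(\omeg)$ will be re-engineered so that for any pair of orbits supported on the circle, the angular difference at time $n$ visits many equally spaced points on the circle as the observation window grows.

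The key analytic ingredient is Lemma \ref{LemPi}. For a pair $(s,s')$ in the $d_2$-system, the plan is to show that over a time window of length $2^{m_i}$ the angular differences $\phi_n$ between $F^n(s)$ and $F^n(s')$ fill a large proportion of the $2^{m_i}$ equally spaced points on the circle; this uses the adic structure of $\tau$ via counting arguments analogous to Lemma \ref{lem:freq}. Lemma \ref{LemPi} then pins the proportion of indices $n$ for which $\phi_n$ lies within distance $\delta$ of any fixed target to $2\delta \pm O(2^{-m_i})$. Letting $i \to \infty$ yields $\Phi_{(s,s')}(\delta) = \Phi^*_{(s,s')}(\delta) = 2\delta$ for every $\delta$ in a dense set, and the agreement of the lower and upper distribution functions rules out DC1, DC2, and DC3 simultaneously for the pair $(s,s')$.

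The main obstacle is the dual tuning problem for $F$: its fiber rotation angles must be localized enough that under $d_1$ they produce a DC3 pair concentrated at two well-separated values, yet dispersive enough that under $d_2$ every other pair has its angular differences equidistributed on finer and finer subdivisions of the circle. Reconciling these competing demands is where most of the work lies, and it is precisely the step in which the previous attempt in \cite{BSS} fell short --- their chosen conjugacy addressed the isolated-point pair but left intact the circle-circle pairs whose angular differences failed to equidistribute. The fix is to inject enough additional rotational variety into $p(\omeg)$ so that Lemma \ref{LemPi} applies uniformly across all pairs in the $d_2$-system.
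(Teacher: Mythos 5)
Your overall strategy (one map, two compatible metrics, the identity as the conjugating homeomorphism) is legitimate and is how \cite{BSS} framed the problem, and you correctly diagnose why their argument fails: the circle--circle pairs survive the change of metric because $\rho$ and $\rho'$ agree on them. But this observation is exactly what makes your plan incomplete, and arguably unworkable as stated. Since the two metrics coincide on every pair of circle points, whatever rotation function $p(\omega)$ you choose must already make every circle--circle pair non-DC3 in \emph{both} systems; the only pairs that can distinguish $d_1$ from $d_2$ are those involving the isolated fibre, and such a pair is DC3 under $d_1$ precisely when the empirical distribution of the single-orbit rotation sum $S_n(\omega_0)=\sum_{j<n}p(\tau^j\omega_0)\bmod 1$ fails to converge. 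So you need simultaneously (i) $S_n(\omega_0)$ does not converge in distribution, and (ii) for every pair $\omega\neq\omega'$ the difference $S_n(\omega)-S_n(\omega')$ has a convergent distribution. Your proposed mechanism for (ii) --- injecting ``rotational variety'' so that the differences equidistribute over finer and finer grids and Lemma~\ref{LemPi} yields $\Phi=\Phi^*=2\delta$ --- pulls directly against (i): the more you disperse the rotation sums, the more a single orbit also equidistributes, which makes the circle--isolated distance distribution converge and kills your $d_1$ DC3 pair. The two demands are not formally contradictory (one can imagine $S_n(\omega)=T_n+c(\omega)+o(1)$ with $T_n$ oscillating), but reconciling them over the adding machine for an uncountable family of pairs is the entire content of the theorem, and your proposal leaves it as an unspecified ``re-engineering'' of $p$. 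A secondary gap: your density estimates are taken only along the special times $2^{m_i}$, whereas $\Phi$ and $\Phi^*$ are a liminf and limsup over all window lengths; consecutive special times have unbounded ratio, so an interpolation argument between them is required and is not sketched.

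For contrast, the paper escapes this tension by discarding the adding machine altogether. It takes two concentric columns of rings falling to fixed bottom rings, with rotation angle depending only on the height and the column ($1/k$ on the inner column; $1/k$ or $2/k$ on the outer, alternating by block of length $k^k$). Then same-column pairs either have constant angular difference, or a telescoping convergent one, or (when one point is the fixed bottom ring) equidistribute by Lemma~\ref{LemPi} with $\Phi=\Phi^*=2\delta$; the genuinely DC3 pair is a cross-column pair whose relative rotation vanishes on odd blocks (giving $\Phi^*=1$) and equidistributes on even ones (giving $\Phi\le 2\delta$). The conjugacy then simply lifts the inner column by $1.3$, so every cross-column distance converges to $1.3$ in the max-metric and the DC3 pair is destroyed without touching the within-column dynamics. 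If you want to complete your own route you must exhibit $p$ explicitly and verify (i) and (ii) above; the paper's choice of a base where the rotation depends only on the time step is exactly what makes (ii) automatic.
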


\begin{proof}

Throughout the proof, we use the cylindrical coordinate system for $\mathbb R^3$.  This means that the point $(r\cos(2\pi\phi),r\sin(2\pi\phi),z)$, $r\ge 0$, $\phi\in[0,1]$, will be denoted more compactly as $(r, \phi, z)$.
Let $d$ be the max-metric on $\mathbb R^3$ given by $d (a,b)=\max\{|r_a-r_b|, |z_a-z_b|, \rho(\phi_a,\phi_b)\}$, where $\rho(\phi_a,\phi_b)= \min\{ |\phi_a-\phi_b|, 1-|\phi_a-\phi_b|\}$.
We construct 2 conjugate dynamical systems $(X,f)$ and $(Y,g)$.
We also denote the $j$-th iterate of a point $x$ by $f^j(x)= (r_x^j , \phi^j_x, z^j_x)$, and when no confusion can result, we use the same notation for the iterates of $x$ by $g$.

The space $X$  consists of 2 concentric columns of \lq\lq rings;\rq\rq in each column the rings are accumulating on a bottom-most ring. The definition is
$$X= \left\{ (r,\phi, z): r\in \{0.01, 0.02\}, \phi \in [0,1], z \in \{ \frac{1}{n}; n\in \mathbb N\} \cup \{ 0 \} \right\} .$$
We need the radius difference $|r_1-r_2|$ to be smaller than $1/2$. For an easier image we choose $r_1=0.01$ and $r_2=0.02$, but it is not necessary to work with these two exact numbers.

The map $f: X \to X$ carries each ring down to the next lower ring with some rotation and fixes the bottom ring. The definition is:
\begin{equation}
 f(r,\phi, \frac{1}{n}) = (r, \phi+ \varphi^{(r)}_n, \frac{1}{n+1}) \quad \mbox{and} \quad  f(r , \phi, 0) = (r , \phi, 0)
 \end{equation}
where $\phi+ \varphi^{(r)}_n$ is computed modulo 1 and the rotation angle $\varphi^{(r)}_n$ is given by
{
\renewcommand{\arraystretch}{2.1}
\begin{equation}\label{fi}
  \begin{array}{l l}
   	 \varphi^{(0.01)}_n = \dfrac{1}{k},  \mbox{ \, for } l_{k-1} < n \le l_{k} , k\in \mathbb{N}& \\
    	 \varphi^{(0.02)}_n = \left\{ \begin{array}{l  l}
  				  \dfrac{1}{k}, \mbox{ \, for } l_{k-1} < n \le l_{k},  \textnormal{\, odd } k  &\\
						\dfrac{2}{k}, \mbox{ \, for } l_{k-1} < n \le l_{k},  \textnormal{\, even } k &\\
  						\end{array} \right.
  \end{array} 
\mbox{where } \ l_k= \sum\limits_{i=1}^{k} i^{i}, \ \mbox{for} \ k \in \mathbb N \ \mbox{and} \ l_0 = 0. \\
  \end{equation}
  }

We may think of $\varphi^{(r)}_n $ as a sequence being divided into blocks, where the $k$th- block consists of the angles $\frac{1}{k}$ or $\frac{2}{k}$ repeated $k^k$ times.
 We needed to make the block length a multiple of $k$ and with the property that $\frac{l_k}{ l_{k+1}-l_k} \to 0$ for $k \to \infty $. It is easy to see that for our blocks it is true and so for $k \to \infty $
 \begin{equation}\label{Lk}
  \frac{l_k}{(k+1)^{k+1}} \to 0  \quad \mbox{ and } \quad  \frac{k^k}{l_k} \to 1 .
 \end{equation}

 We will see that the only DC3 pairs in $(X,f)$ are  points in different cylinders and which are not fixed.  The chaos is detected by distances of angles, which are always smaller than 1/2, so we will \lq\lq kill\rq\rq chaos in the conjugate system by making the distance bigger than 1/2 for every two points in different cylinders and keeping the map on each cylinder the same as before.  We can do it by lifting the inner cylinder by more than 1/2 unit, specifically in our case we choose 1.3 units and we can imagine this space like an extended telescope.
 
We construct a conjugate dynamical system $(Y,g)$ by the following definitions 
\begin{equation}\label{PI}
\begin{gathered}
\mbox{$Y=\Pi(X)$ and $g=\Pi\circ f \circ \Pi^{-1}$,}\\
\mbox{where }\Pi((r, \phi, z)) = \begin{cases}
    (r ,\phi, z +1.3), & \mbox{if $r=0.01$},\\
    (r,\phi, z),  &  \mbox{if $r=0.02$}.
\end{cases}
\end{gathered}
\end{equation}

 We will first show that in $(Y, g)$ there do not exist any DC3 pairs and after that we will show that in $(X,f)$ there are DC3 pairs. So lifting the inner column by the homeomorphism $\Pi$ destroys all DC3 pairs.

\begin{enumerate}[(A)]
\item  \label{Y} 
We claim that there are no DC3 pairs in $(Y,g)$.\\
We can see all possible different cases for pairs $(x,y) \in Y\times Y$ on the following graph, where $x~=~(r_x,\phi_x,z_x)$ and $ y~=~(r_y,\phi_y,z_y)$.\\

\tikzstyle{level 1}=[level distance=2cm, sibling distance=2.75cm]
\tikzstyle{level 2}=[level distance=2.5cm, sibling distance=2.75cm]
\tikzstyle{level 3}=[level distance=2.5cm, sibling distance=3cm]
\tikzstyle{level 4}=[level distance=3.5cm, sibling distance=1.5cm]
\tikzstyle{end} = [rectangle, draw, text width=6em, text centered]
\tikzstyle{1} = [circle, draw, text centered]

\begin{tikzpicture} [grow'=right, sloped]
\node[1] {$(Y,g)$} 
    child { node [end]{$r_x \neq r_y$ \\ (\ref{Y1})} }
    child { node {$r_x = r_y$}
    	child { node[end] {$z_x = z_y$ \\ (\ref{Y2})} }
        child { node {$z_x \neq z_y$}
        		child { node {$r_x=r_y=0.01$}
			child { node[end] {$z_x , z_y \neq 1.3 $  \\ (\ref{Y3})}}
			child { node[end] {$ z_y = 1.3 $ \\ (\ref{Y4}) }}
		}	
		child  { node {$r_x=r_y=0.02$}
           	 	child {node[end] {$ z_y = 0 $ \\ (\ref{Y5}) }}
			child {  node[end] {$z_x , z_y \neq 0 $ \\ (\ref{Y6}) }}  
		}
	}		
	};
\end{tikzpicture}

\begin{enumerate}[(\ref{Y}1)]
\item  \label{Y1} $r_x \neq r_y$\\
Since $|r^j_x - r^j_y|= 0.01$, $\rho(\phi^j_x, \phi^j_y) \in [0,1/2] $  for all $j$ and by (\ref{PI}) $\displaystyle\lim_{j\to\infty}|z^j_x - z^j_y| = 1.3$,  by the maximal metric  we get:
$
\displaystyle\lim_{j\to\infty} d(g^j(x), g^j(y))=1.3
$.
 That means any such $(x,y)$ is not DC3.\\

\item  \label{Y2} $r_x = r_y=r$, $z_x = z_y=z$  ($\phi_x \neq \phi_y$)\\
If $z=0$ or $z=1.3$, then $x$ and $y$ are fixed points and we are done. \\So let's say $z= \frac{1}{c}$ or $z=\frac{1}{c} +1.3$, where $c\in \mathbb N$. 
Since  $z_x = z_y$ and $r_x = r_y$

\begin{minipage}{\dimexpr\textwidth-2\leftmargin\relax}
$$
d(g^j(x), g^j(y))= \rho(\phi^j_x, \phi^j_y) = \rho\left[\left(\phi_x + \sum_{n=c}^{c+j-1}\varphi^{(r)}_n \right) ,\left( \phi_y + \sum_{n=c}^{c+j-1}\varphi^{(r)}_n\right)\right] = \rho(\phi_x, \phi_y) .
$$
\end{minipage}

That means $\displaystyle\lim_{j\to\infty} d(g^j(x), g^j(y))=\rho(\phi_x, \phi_y)$ and so $(x,y)$ is not DC3.\\

\item  \label{Y3} $z_x \neq z_y$ and $z_x,z_y \neq1.3$, $r_x = r_y= r = 0.01$\\
Let $z_x = \frac{1}{c_x}+1.3, \, z_y = \frac{1}{c_y}+1.3$, and without loss of generality we can say $c_x < c_y$.
Then $\displaystyle\lim_{j\to\infty}|z^j_x - z^j_y|=\displaystyle\lim_{j\to\infty}\left |\left(\tfrac{1}{c_x +j} + 1.3\right) - \left(\tfrac{1}{c_y + j} + 1.3\right) \right|= 0$ and $|r^j_x - r^j_y|= 0$ for any $j\in \mathbb N$. Because of this, it is enough to check $\rho(\phi^j_x, \phi^j_y)$,

\begin{minipage}{\dimexpr\textwidth-2\leftmargin\relax}
$$\displaystyle\lim_{j\to\infty} d (g^j(x), g^j(y))= \displaystyle\lim_{j\to\infty}\rho(\phi^j_x, \phi^j_y)= 
\displaystyle\lim_{j\to\infty} \rho\left[\left(\phi_x +  \sum_{n=c_x}^{c_x+j-1}\varphi^{(r)}_n\right) , \left(\phi_y + \sum_{n=c_y}^{c_y+j-1}\varphi^{(r)}_n \right)\right] .$$
\end{minipage}

Writing $c_y = c_x + c$ and expanding the sums, this becomes

\begin{minipage}{\dimexpr\textwidth-2\leftmargin\relax}
\begin{equation*}
\lim_{j\to\infty} \rho\left[\left(\phi_x +  \sum_{n=c_x}^{c_x+c -1}\varphi^{(r)}_n +  \sum_{n=c_x + c}^{c_x+j-1}\varphi^{(r)}_n\right) , \left( \sum_{n=c_x + c}^{c_x+j-1}\varphi^{(r)}_n + \sum_{n=c_x + j }^{c_x + j +c-1}\varphi^{(r)}_n + \phi_y  \right)\right].
\end{equation*}
\end{minipage}

We may cancel the innermost sums.  By (\ref{fi}) we find $\lim_{j\to\infty} \sum_{n=c_x+j}^{c_x+j+c-1}\varphi^{(r)}_n = 0$.  Therefore
$\displaystyle\lim_{j\to\infty}\rho(\phi^j_x, \phi^j_y) $ exists.
That means the limit $\displaystyle\lim_{j\to\infty} d(g^j(x), g^j(y))$ exists and so $(x,y)$ is not DC3.\\ 

\item  \label{Y4} $z_x \neq z_y$ and $z_y =1.3$, $r_x = r_y=0.01$\\
As we already mentioned for these cases
$\displaystyle\lim_{j\to\infty}|z^j_x - z^j_y|= 0$ and $|r^j_x - r^j_y|= 0$  for all $j$.
That means for every $\delta$ there exists $m_\delta$ such that for all $j > m_\delta$: $|z_x^j - z_y^j| < \delta$  and so 

\begin{equation} 
\begin{gathered}
\#\{0 \le j \le n;d(g^j(x),g^{j}(y))<\delta\}=\#\{0 \le j \le n;\rho(\phi^j_x, \phi^j_y)<\delta\} - c_\delta \\
\mbox{where} \quad 
c_\delta= \#\{0\le j \le m_\delta ;|z_x^j - z_y^j| \ge \delta\ \mbox{ and \, } \rho(\phi^j_x, \phi^j_y)<\delta\} .
\end{gathered}
\end{equation}
We can see that $c_\delta$ is just some final constant (for each $\delta$) which for $n~\to~\infty $ does not matter and so we can calculate  $\Phi^*_{(x, y)}(\delta)$ and $ \Phi_{(x, y)}(\delta)$ using $\rho(\phi^j_x, \phi^j_y)$ in place of $d(g^j(x), g^j(y))$.
Moreover, notice that $ \rho(\phi^j_x, \phi^j_y) \in [0,0.5]$ for all $j$, so  $ \Phi^*_{(x, y)}(\delta)= \Phi_{(x, y)}(\delta)=1$ for $\delta > 0.5$.
From the definition of the map $g$ our $y= (0.01, \phi_y, 1.3)$ is a fixed point, and for simplicity let's take $z_x=1+1.3$ (if it is not, we replace $x$ by one of its preimages). Then we can write\\
\begin{minipage}{\dimexpr\textwidth-2\leftmargin\relax}
$$
   \rho(\phi^j_x, \phi^j_y)~=~\rho\left(\phi_x~+~\displaystyle\sum_{n=1}^{j}\varphi^{(r)}_n~,~\phi_y\right).
$$
\end{minipage}\\
We may think about $\phi^j_x$ in blocks. In the $k$th-block (between $l_{k-1}$ and $ l_k$) we can see the sequence of these $k$ angles
$\{ \phi^{l_{k-1}}_x + 0, \phi^{l_{k-1}}_x + \frac{1}{k}, \phi^{l_{k-1}}_x + \frac{2}{k},...,\phi^{l_{k-1}}_x + \frac{k-1}{k} \}$ exactly $k^{k-1}$ times.
Then by Lemma~\ref{LemPi} we obtain
\begin{equation} \label{pik}
\begin{gathered}
2\delta k - 1 \le \pi_k  \le 2\delta k + 1 \\
\mbox{where} \quad 
\pi_k := \#\{l_{k-1}\le j \le l_{k-1} + k-1; \rho(\phi_x^j, \phi_y) < \delta \}
\end{gathered}
\end{equation}
and so in the whole  $k$-th block: 
\begin{equation} \label{PIK}
\begin{gathered}
k^k 2\delta - k^{k-1} \leq B_k  \leq 2\delta k^k +  k^{k-1} \\
\mbox{where} \quad 
B_k := \#\{l_{k-1}\le j < l_{k}; \rho(\phi_x^j, \phi_y) < \delta \} = k^{k-1} \pi_k \ .
\end{gathered}
\end{equation}

To show equality of $\Phi^*_{(x, y)}(\delta)=\Phi_{(x, y)}(\delta)$ we need to consider all natural numbers $n$.
For each $n$ there exist unique $k_n,\alpha_n$ and $\beta_n$ such that $\alpha_n < k_n^{(k_n-1)}$, $\beta_n < k_n$ and
\begin{equation} \label{N}
	n= 1^1 + 2^2 +...+ (k_n-1)^{(k_n-1)} + \alpha_n k_n + \beta_n = l_{k_n-1} + \alpha_n k_n + \beta_n .
	\end{equation}

Let's mark $ p_n := \#\{0 \le j \le n;\rho(\phi^j_x, \phi^j_y)<\delta\}$. Then by (\ref{PIK}) and (\ref{N}), for each $n$ there exists a unique $\gamma_n~\le~\pi_{k_n}$ such that 
\begin{equation} \label{P}
\begin{gathered}
p_n =  \sum\limits_{j=1}^{k_n-1}B_{j}  + \alpha_n \pi_{k_n} + \gamma_n =
		1^0\pi_1 + ...+ (k_n-1)^{(k_n-2)}\pi_{k_n-1} + \alpha_n \pi_{k_n} + \gamma_n  =\\
 = \sum\limits_{j=1}^{k_n-1} j^{j-1}\pi_{j}  + \alpha_n \pi_{k_n} + \gamma_n .
\end{gathered}
\end{equation}	

	By (\ref{pik}) and (\ref{P})
	$$\qquad \qquad \quad p_n \le
	\sum\limits_{j=1}^{k_n-1} (j^{j-1}(2\delta j +1))  + \alpha_n (2\delta k_n +1) + \gamma_n 
	= \sum\limits_{j=1}^{k_n-1} (j^{j}(2\delta)) + \sum\limits_{j=1}^{k_n-1} j^{j-1} +\alpha_n (2\delta k_n +1)+ \gamma_n .$$
	Hence
	\begin{equation} \label{pocet1}
p_n \le l_{k_n-1} 2 \delta + \sum\limits_{j=1}^{k_n-1} j^{j-1} +\alpha_n (2\delta k_n +1)+ \gamma_n .
	\end{equation}

And similarly
\begin{equation} \label{pocet2}
	p_n \ge
		l_{k_n-1} 2 \delta  -  \sum\limits_{j=1}^{k_n-1} j^{j-1}  + \alpha_n (2\delta k_n - 1) + \gamma_n .
	\end{equation}

By (\ref{Lk}), (\ref{N}) and (\ref{pocet1}) we get
\begin{equation} \label{Up1}
\begin{gathered}
\Phi^*_{(x, y)}(\delta) \le 	
\displaystyle\limsup_{n\to\infty}\dfrac{l_{k_n-1} 2 \delta + \sum\limits_{j=1}^{k_n-1} j^{j-1} +\alpha_n (2\delta k_n +1)+ \gamma_n}{l_{k_n-1} + \alpha_n k_n + \beta_n} = \\
= \displaystyle\limsup_{n\to\infty}\left[\dfrac{ 2 \delta(l_{k_n-1}    + \alpha_n k_n) + \gamma_n }{l_{k_n-1} + \alpha_n k_n + \beta_n } 
+
\dfrac{\sum\limits_{j=1}^{k_n-1} j^{j-1} + \alpha_n  }{l_{k_n-1} + \alpha_n k_n + \beta_n} \right] = 2\delta .\\
\end{gathered}
\end{equation}

and by (\ref{Lk}), (\ref{N}) and (\ref{pocet2}) we get
\begin{equation} \label{Lo1}
\Phi_{(x, y)}(\delta) \ge 	
\displaystyle\liminf_{n\to\infty}\left[\dfrac{ 2 \delta (l_{k_n-1}   + \alpha_n  k_n) + \gamma_n }{l_{k_n-1} + \alpha_n k_n + \beta_n } 
-
\dfrac{  \sum\limits_{j=1}^{k_n-1} j^{j-1} +\alpha_n  }{l_{k_n-1} + \alpha_n k_n + \beta_n} \right] = 2\delta .
\end{equation}	

From (\ref{Up1}) and (\ref{Lo1}), we obtain
\begin{equation} 
\begin{gathered}
\Phi_{(x, y)}(\delta) = \Phi^*_{(x, y)}(\delta) = 2\delta, \quad \mbox{for } \delta < 1/2 \\
\Phi_{(x, y)}(\delta) = \Phi^*_{(x, y)}(\delta) = 1, \quad \mbox{for } \delta \ge 1/2 .
\end{gathered}
\end{equation}
This shows that $(x,y)$ is not DC3.\\

\item  \label{Y5} $z_x \neq z_y$ and $z_y=0$, $r_x = r_y=0.02$\\
This case is quite similar to (\ref{Y4}), so for the same reasons we will use $\rho(\phi^j_x, \phi^j_y)$ in place of  $d(g^j(x), g^j(y))$ for computing the upper and lower distributional function.
We can also see, that $y$ is again a fixed point and for the same reason as in (\ref{Y4}) we can take $z_x=1$.  We will also use the same notation as in (\ref{Y4}).
We may again think in blocks indexed by $k$, but now we have to distinguish between even and odd $k$. 
For $k$-odd, blocks are exactly the same as in (\ref{Y4}) - see (\ref{pik} - \ref{P}). 
But for even $k$ in every iteration we add the angle $2/k$ instead of $1/k$, then by using Lemma~\ref{LemPi} we get
\begin{equation}
\begin{gathered}
 \mbox{ $k$ is odd} \implies 2\delta k - 1 \le \pi_k  \le 2\delta k + 1 \\
 \mbox{ $k$ is even} \implies 2\delta\frac{k}{2} - 1 \le \pi_k  \le 2\delta \frac{k}{2} + 1 \\
 \mbox{ where } \pi_k :=   \begin{cases}
  				   \#\{l_{k-1}\le j \le l_{k-1} + k-1; \rho(\phi_x^j, \phi_y) < \delta \}, &\mbox{  for odd }  k\\
						 \#\{l_{k-1}\le j \le l_{k-1} + \frac{k}{2} -1; \rho(\phi_x^j, \phi_y) < \delta \},  &\mbox{ for even } k . \\
  						\end{cases}
 \end{gathered}
  \end{equation}

You can see that for $k$-even we get full rotation in $\frac{k}{2}$ steps instead of $k$ steps, so for the whole block we get

 \begin{equation} 
 B_k =   \begin{cases}
  k^{k-1}\pi_k ,& \mbox{for odd $k$} \\
 2k^{k-1}\pi_k, & \mbox{for even $k$}. \end{cases}
 \end{equation}
 
 Hence
\begin{equation} 
k^k 2\delta-2k^{k-1} \le B_k \le k^k 2\delta +2k^{k-1}, \quad \mbox{for every $k \in \mathbb N$.} 
\end{equation}
Then by same computation as in (\ref{Y4}) we obtain
\begin{equation} 
\begin{gathered}
\Phi_{(x, y)}(\delta) = \Phi^*_{(x, y)}(\delta) = 2\delta, \quad \mbox{for } \delta < 1/2 \\
\Phi_{(x, y)}(\delta) = \Phi^*_{(x, y)}(\delta) = 1, \quad \mbox{for } \delta \ge 1/2 .
\end{gathered}
\end{equation}
This shows that $(x,y)$ is not DC3.\\

\item  \label{Y6} $z_x \neq z_y$ and $z_x,z_y \neq 0$,  $r_x = r_y= r = 0.02$\\
The computing is the same as in (\ref{Y3})
so $\displaystyle\lim_{j\to\infty} d(g^j(x), g^j(y))$ converges and $(x,y)$ is not DC3.\\

\end{enumerate}

\item  \label{X} 
We claim that there is  a DC3 pair in $(X,f)$. \\
We will show that 
$x= (0.01, 0, 1)$ and $y= (0.02, 0, 1)$ is DC3.
Then $|r_x-r_y|= |r^j_x - r^j_y|= 0.01$ and $ |z^j_x - z^j_y|=0$ for all $j$ and $\rho(\phi^j_x, \phi^j_y) \in [0,0.5] $. 
We will compute $\Phi^*_{(x, y)}(\delta)$ and $\Phi_{(x, y)}(\delta)$ for $\delta \in [0.01, 0.5]$ and from the  previous sentence it is clear that for computation we may use just  $\rho(\phi^j_x, \phi^j_y)$.\\
After every $l_k$ steps (in the end of every $k$th block), $\phi^{l_k}_x = \phi^{l_k}_y=0$. Let's denote
\begin{equation} \label{Ak}
 A_k =  \# \{0 \le j \le l_k  ;d(f^j(x),f^{j}(y))<\delta\} \le l_k +1 .
 \end{equation}
After $l_{k-1}$ steps,  if $k-1$ is even, then for the next $k^{k}$ steps $\phi^j_x = \phi^j_y$ so $d(f^j(x), f^j(y))=0.01$ and
\begin{equation} \label{U}
 \# \{l_{k-1} < j \le l_{k-1} + k^{k} = l_k ;d(f^j(x),f^{j}(y))<\delta\} = k^{k} .
 \end{equation} 
 Then by (\ref{Lk}), (\ref{Ak}) and (\ref{U})  we get\\
 \begin{equation} \label{Up}
  \Phi^*_{(x, y)}(\delta) \ge \displaystyle\limsup_{k\to\infty}\dfrac{A_{k-1} +  k^{k}}{l_{k-1} +  k^{k}}= \displaystyle\limsup_{k\to\infty}\dfrac{A_{k-1} +  k^{k}}{l_{k}} = 1.
   \end{equation} \\
If $k-1$ is odd, then again we know that $ \phi^{l_{k-1}}_x = \phi^{l_{k-1}}_y$ and for the next $k^k$ steps, the inner point is rotating with speed $1/k$ while the outer point is rotating with speed $2/k$. From the point of view of calculating distances, the situation is exactly the same as if the inner point made no rotation and the outer point rotated with speed $1/k$. Applying Lemma~\ref{LemPi} (the same as we did in (\ref{PIK}))  we obtain
\begin{equation} \label{L}
 \# \{l_{k-1} < j \le l_{k} ;d(f^j(x),f^{j}(y))<\delta\} \le \# \{l_{k-1} < j \le l_{k} ;\rho (\phi^j_x,\phi^j_y )<\delta\} \le  2\delta k^{k} + k^{k-1} .
 \end{equation}\\
and by (\ref{Lk}), (\ref{Ak}) and (\ref{L}) we get 
 \begin{equation} \label{Lo}
  \Phi_{(x, y)}(\delta)\le \displaystyle\liminf_{k\to\infty}\dfrac{A_{k-1} +  2\delta k^{k } + k^{k-1}}{l_k}  = 2\delta .
   \end{equation} 
By (\ref{Up}) and (\ref{Lo}) we get
\begin{equation} 
\begin{gathered}
\Phi_{(x, y)}(\delta) = \Phi^*_{(x, y)}(\delta) = 0, \quad \mbox{for } \delta < 0.01 \\
\Phi_{(x, y)} \le 2\delta < 1 \le \Phi^*_{(x, y)}(\delta), \quad \mbox{for } \delta \in (0.01,0.5) \\
\Phi_{(x, y)}(\delta) = \Phi^*_{(x, y)}(\delta) = 1, \quad \mbox{for } \delta \ge 1/2 .
\end{gathered}
\end{equation}\\
 That shows that $(x,y)$ is a DC3 pair. 
\end{enumerate}
 \end{proof}
 

\begin{theorem}\label{Th3}
Distributional chaos of type 3 (assuming the existence of an uncountable scrambled set) is not preserved by conjugacy.
\end{theorem}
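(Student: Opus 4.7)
The plan is to extend the two-column construction of Theorem \ref{ThPairs} to an uncountable family of columns parameterized by a Cantor set of radii, giving an uncountable DC3 scrambled set in $(X,f)$, and then to choose a height-lifting homeomorphism $\Pi$ so that only a prescribed pair of columns survives as a DC3 pair in $(Y,g)$.

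To build $(X,f)$, take a Cantor set $K \subset [0.01, 0.02]$, fix a homeomorphism $K \cong \{0,1\}^{\mathbb{N}}$ so that each $r \in K$ encodes a binary sequence $(r_1, r_2, \ldots)$, and set $X = \{(r,\phi,z) : r \in K,\ \phi \in [0,1],\ z \in \{1/n : n \in \mathbb{N}\} \cup \{0\}\}$ in cylindrical coordinates with the max-metric from Theorem \ref{ThPairs}. Define the rotation on column $r$ by $\varphi^{(r)}_n = (1 + r_k)/k$ for $n \in (l_{k-1}, l_k]$, using the same block lengths $l_k$. For any two distinct $r, r' \in K$ whose binary codes agree on infinitely many indices and differ on infinitely many indices, the blockwise analysis from the proof of Theorem \ref{ThPairs} applied to the pair $((r,0,1),(r',0,1))$ yields $\Phi(\delta) \leq 2\delta$ (from ``different'' blocks) and $\Phi^*(\delta)$ close to $1$ (from ``same'' blocks), so the pair is DC3. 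Selecting an uncountable subfamily $K_0 \subset K$ of pairwise tail-inequivalent sequences (in the spirit of Section 3), the set $S = \{(r,0,1) : r \in K_0\}$ becomes an uncountable DC3 scrambled set in $(X,f)$.

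For the conjugation, fix a designated pair $r^\ast, r^{\ast\ast} \in K_0$ and define $\Pi(r,\phi,z) = (r,\phi,z+h(r))$ for a suitable continuous function $h: K \to \mathbb{R}$ with $h(r^\ast) = h(r^{\ast\ast}) = 0$. Put $(Y,g) = (\Pi(X), \Pi \circ f \circ \Pi^{-1})$. The pair $(\Pi(r^\ast,0,1), \Pi(r^{\ast\ast},0,1))$ stays DC3 by the same argument as in case (B) of Theorem \ref{ThPairs}. For any other pair of points lying on columns $r_1, r_2$ with $|h(r_1) - h(r_2)| > 1/2$, the $z$-component of the max-metric is eventually constant and exceeds $1/2$, so that pair fails the DC3 condition. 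The goal is therefore to arrange $h$ so that every triple of distinct points in $Y$ contains at least one pair with such a large $z$-gap.

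The hard part is designing $h$. A finite $\epsilon$-net argument rules out placing uncountably many columns at mutually $1/2$-separated heights in the compact target, so $h$ cannot simply send $K_0 \setminus \{r^\ast, r^{\ast\ast}\}$ injectively to a widely spaced set. The resolution I anticipate is to combine the height-lifting with a refinement of the rotation patterns: partition $K_0 \setminus \{r^\ast, r^{\ast\ast}\}$ into two groups $A$ and $B$, place them at two heights differing by more than $1/2$, and re-tune the rotation patterns so that within each of $A$ and $B$ the cumulative rotation differences of pairs of columns converge, making such pairs asymptotic rather than DC3. The DC3 graph in $Y$ then consists essentially of the single distinguished edge $\{\Pi(r^\ast,0,1), \Pi(r^{\ast\ast},0,1)\}$, so the maximal DC3 scrambled set in $(Y,g)$ has cardinality exactly $2$, while $(X,f)$ still admits the uncountable pairwise DC3 scrambled set $S$. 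The simultaneous design of $K_0$, the rotation patterns, and the height function $h$ so that both properties hold is the technical crux, and follows blockwise distribution-function estimates analogous to those of Theorem \ref{ThPairs}.
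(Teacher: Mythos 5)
Your construction of $(X,f)$ with an uncountable DC3 set is plausible (granting that you choose codes that pairwise agree \emph{and} disagree infinitely often, which tail-inequivalence alone does not guarantee), but the conjugacy half of your argument has a gap that your proposed repair does not close --- it is self-defeating. A height-lift $\Pi(r,\phi,z)=(r,\phi,z+h(r))$ restricted to any level set of $h$ is an isometry in all three coordinates of the max-metric, and it commutes with the dynamics, so for any two columns with $h(r_1)=h(r_2)$ the distance sequences $d(f^n p_1,f^n p_2)$ and $d(g^n\Pi p_1,g^n\Pi p_2)$ are \emph{identical}; such a pair is DC3 in $Y$ if and only if it is DC3 in $X$. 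More generally a pair is killed only when $|h(r_1)-h(r_2)|$ exceeds the range of the angular oscillation (about $1/2$ here), and, as you note, compactness forbids uncountably many pairwise $1/2$-separated heights. So uncountably many columns of $K_0$ must cluster within a single height band, and every DC3 pair among them survives into $(Y,g)$. Your fix --- split into two groups $A$, $B$ and retune the rotations so that pairs \emph{within} a group become asymptotic --- destroys exactly what you need to preserve: if within-group pairs have convergent angular difference, they already fail to be DC3 in $(X,f)$ (heights tend to $0$, radii are constant, angles converge), so $S$ can meet each group in at most one point and is no longer uncountable. There is no way to have $S$ uncountable and pairwise DC3 in $X$ while a height-lift renders all but countably many of those pairs non-DC3 in $Y$.

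The paper escapes this trap by running the argument in the opposite direction with a different mechanism. In its system $(X,f)$ every column rotates by the \emph{same} angle $p(k)$ (depending only on the height, not on the radius), so any two non-fixed points have convergent distance and the only DC3 pairs involve a fixed point; the maximal scrambled set has cardinality $2$. The conjugacy is not a height-lift but a radial distortion, $\Pi(x,y,z)=(2x,y,z)$ for $x<0$ and the identity for $x\geq0$: it stretches distances between concentric circles precisely when the synchronized rotation carries the points to the left half-plane. Because $p$ is tuned so that the common angle spends long alternating stretches at $0$ and at $\tfrac12$, the distance between any two columns $r_1\neq r_2$ in the conjugate system oscillates between $\epsilon=|r_1-r_2|$ and $2\epsilon$ with upper density $1$ and lower density $0$, making the entire uncountable family $\{(r,0,1):r\in\tilde C\}$ pairwise DC3 scrambled. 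The moral difference is that the paper's conjugacy couples the metric distortion to the \emph{angular position} rather than to the \emph{column label}, which is what lets it act nontrivially on uncountably many pairs at once; a column-by-column translation such as your $h$ cannot do this. If you want to salvage your direction, you would need a conjugacy whose effect on a pair depends on where the pair sits along its orbit, not merely on which columns the two points inhabit.
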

\begin{proof}
Let $I$ be the unit interval and $C$ be the middle-thirds Cantor set inside $I$. Let $\tilde{C}$ be $C$ translated by 1, i.e. $\tilde{C} = C + 1$ and $\mathbb{S}_r$ be the circle with radius $r\in\tilde{C}$ and with center at the origin. The desired counterexample is constructed on the union in $\mathbb{R}^3$ of uncountably many concentric cylinders above $\{\mathbb{S}_r:r\in\tilde{C}\}$ (height of points in these cylinders ranges only in $\{\frac{1}{k}:k\in\mathbb{N}\}\cup\{0\}$), with a skew-product mapping. The base map of this skew-product mapping decreases the height of a point in the limit to 0 and each fiber map is a rotation by an angle $p(k)$, where $p:\mathbb{N}\rightarrow I$ is a continuous mapping.

 Define $$X=\left\{(r\cos(2\pi\varphi),r\sin(2\pi\varphi),z): r\in \tilde{C},\varphi\in I, z\in \{\frac{1}{k}:k\in\mathbb{N}\}\cup\{0\}\right\}.$$
The transformation $f:X\rightarrow X$ is the identity for points with zero third coordinate,
$$(r\cos(2\pi\varphi),r\sin(2\pi\varphi),0)\mapsto (r\cos(2\pi\varphi,r\sin(2\pi\varphi),0),$$
and for the other points $f$ is defined as follows
$$\left(r\cos(2\pi\varphi),r\sin(2\pi\varphi),\frac{1}{k}\right ) \mapsto \left(r\cos(2\pi(\varphi+p(k))),r\sin(2\pi(\varphi+p(k))),\frac{1}{k+1}\right ).$$
To define the function $p:\mathbb{N}\rightarrow I$, let $0<n_0<n_1<\ldots$ be an increasing sequence of integers which will be specified later and 
\begin{equation}\label{gr}p(k)= \left\{
  \begin{array}{l l l}
    \frac{1}{2^{m+1}} &  \text{ if } s_m<k\leq s_m+2^m\\
    0& \text { if } s_m+2^m<k\leq s_m+n_m,
  \end{array} \right.
  \end{equation}
  where $s_m=\sum_{i=0}^{m-1}n_i$ and $s_0=0$.
  Let $(Y,F)$ be a dynamical system conjugated with $(X,f)$ via the following homeomorphism
\begin{equation}\label{pr}\Pi((x,y,z)) = \left\{
  \begin{array}{l  l}
    (x,y,z) & \mbox{if $x\geq 0$}\\
    \left(2x,y,z\right) &  \mbox{if $x< 0$}\\
  \end{array} \right.
  \end{equation}
  where $Y=\Pi(X)$ and $F=\Pi\circ f \circ \Pi^{-1}$.
 
  I. $(Y,F)$ is DC3\\
  We claim that the set $S\subset Y$,
  $$S=\{(r,0,1): r\in \tilde{C}\},$$
  is distributionally scrambled of type 3. 
 Denote $f^i(x)=\left(r\cos(2\pi\eta^i),r\sin(2\pi\eta^i),z^i\right)$, for $i\geq 0$. Then for $x\in S$ $$z^i=\frac{1}{1+i}$$ and $$\eta^i=\Sigma_{j=1}^i p(j)\mod 1.$$
  Let
$$U_k=\#\{0\leq i\leq s_k+n_k; \eta^i=0\}, \quad L_k=\#\{0\leq i\leq s_k+n_k; \eta^i=\frac{1}{2}\}.$$
By (\ref{gr}) and the definition of $\eta^i$, we can see that for $k\geq 0$
$$L_{2k}\geq n_{2k}-2^{2k}\quad\text{and}\quad U_{2k+1}\geq n_{2k+1}-2^{2k+1}.$$
Let $x_1, x_2$ be two distinct points in $S$ such that $x_1=(r_1,0,1)$ and $x_2=(r_2,0,1)$.  Then $d(x_1,x_2)=|r_1-r_2|=\epsilon.$ By the definition of $F$,
  $$d(F^i(x_1),F^i(x_2))=d(f^i(x_1),f^i(x_2))=\epsilon, \text{ if } \eta_i=0$$
  and
  $$d(F^i(x_1),F^i(x_2))=2\cdot d(f^i(x_1),f^i(x_2))=2\epsilon,\text{ if } \eta_i=\frac 12.$$
  It follows for any $\delta\in[\epsilon,2\epsilon],$
  \begin{equation}\label{R*}\Phi^*_{(x_1,x_2)}(\delta)\geq \lim_{k\to\infty} \frac{1}{s_{2k+1}+n_{2k+1}}U_{2k+1}\geq \lim_{k\to\infty}\frac{n_{2k+1}-2^{2k+1}}{n_{2k+1}+s_{2k+1}}
  ,\end{equation}
  \begin{equation}\label{R}\Phi_{(x_1,x_2)}(\delta)\leq 1-\lim_{k\to\infty} \frac{1}{s_{2k}+n_{2k}}L_{2k}\leq 1-\lim_{k\to\infty}\frac{n_{2k}-2^{2k}}{s_{2k}+n_{2k}}.\end{equation}
We set the sequence $0<n_0<n_1<\ldots$ in such a way that 
$$\lim_{k\to\infty}\frac{s_k}{n_k}=0\quad\text { and simultaneously }\quad\lim_{k\to\infty}\frac{2^k}{n_k}=0.$$
By (\ref{R*}) and (\ref{R}), we get for any $\delta\in [\epsilon,2\epsilon]$,
  $$\Phi^*_{(x_1,x_2)}(\delta)=1, \quad \Phi_{(x_1,x_2)}(\delta)=0.$$
  
 II. $(X,f)$ is not DC3 \\
 It is sufficient to show that every DC3 pair in $X$ contains a fixed point and therefore the maximum cardinality of any distributionally scrambled set in $X$ is $2$. Let $$Fix=\{(r\cos 2\pi\varphi,r\sin 2\pi\varphi,0): r\in\tilde{C},\varphi\in I\}$$
 be the set of all fixed points. We prove that for every $(x,y)\in(X \setminus Fix)^2$ the limit $\lim_{n\to\infty}d(f^n(x),f^n(y))$ always exists and hence $\Phi^*_{(x,y)}(\delta)=\Phi_{(x,y)}(\delta),$ for any $\delta>0$. We assume the max-metric $d(x,y)=\max \{|r_x-r_y|,\rho(\varphi_x,\varphi_y),|\frac{1}{k_x}-\frac{1}{k_y}|\}$ in the rest of the proof, where $\rho$ is the metric on the unit circle $\rho(\alpha,\beta)=\min \{|\alpha-\beta|,1-|\alpha-\beta| \}$.

Let $x=\left(r_x\cos 2\pi\varphi_x, r_x\sin 2\pi\varphi_x,\frac{1}{k_x}\right)$ and $y=\left(r_y\cos 2\pi\varphi_y, r_y\sin 2\pi\varphi_y,\frac{1}{k_y}\right)$. We assume without loss of generality that $k_x\leq k_y$, and after replacing $x$ and $y$ by their $(k_x - 1)$th preimages, we may assume that $k_x=1$ and we may write $l=k_y$.  By the definition of $f$, the distance $|r_{f^n(x)} - r_{f^n(y)}|=|r_x - r_y|$ is constant and the distance $|\frac{1}{k_{f^n(x)}}-\frac{1}{k_{f^n(y)}}|$ decreases to zero.  Moreover, using the fact that $\lim_{n\to\infty} \sum_{k=n+1}^{n+l} p(k)=0,$ we find that the difference between the angles (with all calculations modulo 1) converges to a constant:
 \begin{multline*}
\lim_{n\to\infty} \varphi_{f^n(x)}-\varphi_{f^n(y)} = \lim_{n\to\infty} \left(\varphi_x+\sum_{k=1}^n p(k)\right) - \left(\varphi_y+\sum_{k=l}^{n+l} p(k)\right) = \\
 = \varphi_x - \varphi_y + \sum_{k=1}^{l-1} p(k) - \lim_{n\to\infty} \sum_{k=n+1}^{n+l} p(k) = \varphi_x - \varphi_y + \sum_{k=1}^{l-1} p(k).
 \end{multline*}
 It follows that $\rho(\varphi_{f^n(x)},\varphi_{f^n(y)})$ converges to a constant.  The existence of the limit $\lim_{n\to\infty} d(f^n(x),f^n(y))$ follows from the separate convergence in all three coordinates.
\end{proof}

\section{Distributional chaos of type $2\frac 12$}
Nevetherless, the notion of DC3 can be strengthened in such a way that it is preserved under conjugacy and implies Li-Yorke chaos:\\

\begin{definition}
A pair $(x_1, x_2)\in X^2$ is called 
\emph{distributionally scrambled of type $2\frac 12$} if there are positive numbers $c$ and $s$ such that, for any $0<\delta<s$,
$$\Phi_{(x_1, x_2)}(\delta)<c< \Phi^*_{(x_1, x_2)}(\delta).$$
\end{definition}

By simple observation we can see that if $(x_1, x_2)\in X^2$ is $DC2\frac{1}{2}$, then it is Li-Yorke scrambled. Since $ \Phi^*_{(x_1, x_2)}(\delta)>c$ for arbitrary small $\delta$, $(x_1, x_2)$ must be proximal (for distal pairs $\Phi^*_{(x_1, x_2)}(\epsilon)=0$ for some $\epsilon>0$). Similarly, $(x_1, x_2)$ is not asymptotic (for asymptotic pairs $\Phi_{(x_1, x_2)}(\delta)=1$ for every $\delta>0$).\\

\begin{remark}
 We call a dynamical system \emph{strictly $DC2\frac{1}{2}$} if it possess an uncountable $DC2\frac{1}{2}$ set but no $DC2$ pairs. By results in \cite{DL}, positive topological entropy implies existence of an uncountable DC2 set, hence strictly $DC2\frac{1}{2}$ systems must have zero topological entropy.
\end{remark}

\begin{theorem}\label{4}
 Let $f$ and $g$ be topologically conjugate continuous maps of a compact metric space $(X,d)$. Then $f$ is $DC2\frac{1}{2}$ if and only if $g$ is $DC2\frac{1}{2}$.
 \end{theorem}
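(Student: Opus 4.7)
The plan is to show directly that for any DC$2\tfrac{1}{2}$ pair of $f$, its image under the conjugacy is a DC$2\tfrac{1}{2}$ pair of $g$; an uncountable DC$2\tfrac{1}{2}$-scrambled set for $f$ will then produce one for $g$ by taking the image, and the reverse implication follows by swapping the roles of $f$ and $g$. Let $\pi \colon X \to X$ be the conjugating homeomorphism with $g \circ \pi = \pi \circ f$. The crucial feature of the setting is that, since $X$ is compact, both $\pi$ and $\pi^{-1}$ are \emph{uniformly} continuous.

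Fix a DC$2\tfrac{1}{2}$-scrambled pair $x_1 \neq x_2$ for $f$ with witnesses $c, s > 0$, so that $\Phi^{f}_{(x_1,x_2)}(\delta) < c < \Phi^{f*}_{(x_1,x_2)}(\delta)$ for every $\delta \in (0, s)$, and set $y_i = \pi(x_i)$. The conjugacy identity $g^n(y_i) = \pi(f^n(x_i))$ means the $g$-orbit distances of $y_1, y_2$ are $\pi$-images of the $f$-orbit distances of $x_1, x_2$. Uniform continuity of $\pi$ provides, for each $\epsilon > 0$, some $\delta_1(\epsilon) > 0$ with $d(a,b) < \delta_1(\epsilon) \Rightarrow d(\pi a, \pi b) < \epsilon$; dually, for each $\delta > 0$, uniform continuity of $\pi^{-1}$ yields $\delta_2(\delta) > 0$ with $d(a,b) < \delta_2(\delta) \Rightarrow d(\pi^{-1} a, \pi^{-1} b) < \delta$. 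Applying these implications to the pairs $(f^n x_1, f^n x_2)$ and $(g^n y_1, g^n y_2)$ and taking $\liminf / \limsup$ in $m$ of the corresponding counting ratios, I obtain the two inequalities
\begin{equation*}
\Phi^{f*}_{(x_1,x_2)}\bigl(\delta_1(\epsilon)\bigr) \le \Phi^{g*}_{(y_1,y_2)}(\epsilon)
\quad \text{and} \quad
\Phi^{g}_{(y_1,y_2)}(\epsilon) \le \Phi^{f}_{(x_1,x_2)}(\delta) \text{ whenever } \epsilon \le \delta_2(\delta).
\end{equation*}

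With these in hand, I keep the same threshold $c' := c$ and choose $s'$ as follows: fix any $\delta_0 \in (0, s)$ and put $s' := \delta_2(\delta_0) > 0$. For any $\epsilon \in (0, s')$, after replacing $\delta_1(\epsilon)$ by $\min(\delta_1(\epsilon), s/2)$ if necessary (which only strengthens the implication defining $\delta_1$), the first inequality gives $\Phi^{g*}_{(y_1,y_2)}(\epsilon) \ge \Phi^{f*}_{(x_1,x_2)}(\delta_1(\epsilon)) > c$, while the second inequality with $\delta = \delta_0$ gives $\Phi^{g}_{(y_1,y_2)}(\epsilon) \le \Phi^{f}_{(x_1,x_2)}(\delta_0) < c$. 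Hence $(y_1, y_2)$ is DC$2\tfrac{1}{2}$-scrambled for $g$ with witnesses $(c, s')$; applying this to every pair in an uncountable DC$2\tfrac{1}{2}$-scrambled set $S \subset X$ for $f$ and using injectivity of $\pi$ gives an uncountable DC$2\tfrac{1}{2}$-scrambled set $\pi(S)$ for $g$.

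I do not expect a serious obstacle. The one point worth flagging is the asymmetric handling of the $\delta$-parameters: the lower bound on $\Phi^{g*}$ requires $\delta_1(\epsilon)$ to be small (below $s$) for each $\epsilon$, whereas the upper bound on $\Phi^{g}$ requires a single fixed $\delta_0 \in (0, s)$ which then dictates the threshold $s' = \delta_2(\delta_0)$. This asymmetry is absorbed cleanly by the DC$2\tfrac{1}{2}$ definition precisely because it uses a \emph{single} threshold $c$ that separates $\Phi$ from $\Phi^*$ uniformly in $\delta$; it is exactly this feature, absent from DC3, that allows the property to survive the potentially wild reparametrisation of $\delta$-scales induced by a conjugacy.
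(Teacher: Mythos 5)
Your proposal is correct and follows essentially the same route as the paper's proof: both use uniform continuity of the conjugacy and its inverse (guaranteed by compactness) to obtain the transfer inequalities $\Phi^{f*}_{(x_1,x_2)}(\delta_1(\epsilon)) \le \Phi^{g*}_{(\pi x_1,\pi x_2)}(\epsilon)$ and $\Phi^{g}_{(\pi x_1,\pi x_2)}(\epsilon) \le \Phi^{f}_{(x_1,x_2)}(\delta)$, and both exploit the single separating constant $c$ to absorb the rescaling of $\delta$. Your version is in fact somewhat more careful than the paper's about which of the two parameters must lie below the threshold $s$ and about how the new threshold $s'$ is produced.
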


\begin{proof}
Let $h$ be a homeomorphism conjugating $f$ and $g$ such that $f=h\circ g \circ h^{-1}$. Let $\varepsilon >0$. Since $h$ is a homeomorphism, there is a $\delta >0$ such that for any  $u,v\in X$,
\begin{equation}\label{N1}d(u,v)<\delta \mbox{  implies  } d(h(u),h(v))<\epsilon,\end{equation}
\begin{equation}\label{N2}d(h(u),h(v))<\delta \mbox{  implies  } d(u,v)<\epsilon.\end{equation}
Since $h\circ f^n=g^n\circ h$, it follows by (\ref{N1}) $$d(f^n(u),f^n(v))<\delta \mbox{  implies  } d(g^n\circ h(u),g^n \circ h(v))<\epsilon,$$
and consequently $\Phi^*_{(u,v)}(\delta)\leq \Psi^*_{(h(u),h(v))}(\epsilon)$, where $\Phi^*$ and $\Psi^*$ are the upper distribution functions of $f$ and $g$ respectively. Similarly by (\ref{N2}), $\Psi_{(h(u),h(v))}(\delta)\leq\Phi_{(u,v)}(\epsilon)$.  Then $c<\Phi^*_{(u,v)}(\delta)\leq \Psi^*_{(h(u),h(v))}(\epsilon)$, for any $0<\epsilon<s$, and there is a $0<\delta<s$ such that $\Psi_{(h(u),h(v))}(\delta)\leq\Phi_{(u,v)}(\epsilon)<c$. We can choose $\delta$ arbitrary small, hence we claim that
for any $0<\eta<\delta$,
$$\Psi_{(h(u),h(v))}(\eta)<c< \Psi^*_{(h(u),h(v))}(\eta).$$
\end{proof}
The following example illustrates that $DC2\frac{1}{2}$ is essentially weaker than $DC2$. \\
\paragraph{\bf Example}Let $X$ be the union of a converging sequence of unit fibers and the limit fiber,
  $$X=\left\{\frac{1}{k}:k\in\mathbb{N}\right\}\times I\cup \{0\}\times I,$$
  and $f$ be a skew-product map $f:X\rightarrow X$ which is the identity on the limit fiber,
  $$(0,z) \mapsto (0,z),$$
  and which is $f_k$ on inner fibers,
  $$\left(\frac{1}{k},z\right) \mapsto \left(\frac{1}{k+1},f_k(z)\right),\qquad k\in\mathbb{N}.$$
  To  define $f_k:I\rightarrow I$, let  $0<m_1<m_2<m_3<\ldots$ be an increasing sequence of integers satisfying
  $$\lim_{l\to\infty}\frac{m_l}{m_{l+1}-m_l}=0, \quad \lim_{l\to\infty}\frac{l}{m_{l}}=0,$$ and the difference $(m_{l}-m_{l-1})$ is divisible by $7l$, for any $l\in\mathbb{N}$. Let $h_l(x)=l^{-1/l}x$ and $\bar{h_l}(x)=\min \{1,l^{1/l}x \}$. The sequence $\{f_k\}^{\infty}_{k=1}$ is defined in two ways - if $l$ is odd, then $f_k$ for $m_l<k<m_{l+1}$ is formed from repeated blocks $(h_l)^l ,(Id)^{4l}, ( \bar{h_l})^l, (Id)^l$, where $(h)^l$ means $\underbrace{h,\ldots,h}_{l-\mbox{times}}$.\\
   If $l$ is even, then we change the order and $f_k$ for $m_l<k<m_{l+1}$ is formed from blocks $(h_l)^l ,(Id)^{l}, ( \bar{h_l})^l, (Id)^{4l}.$
  Let
\begin{equation}\label{definition_f}f_k = \left\{
  \begin{array}{l l l}
    h_l&\quad \text {if $m_{l}+i7l\leq k< m_{l}+i7l+l$}&\\
    Id & \quad \text {if $m_{l}+i7l+l\leq k< m_{l}+i7l+5l$}&\qquad k\in\mathbb{N},l\in2\mathbb{N}+1,n\in2\mathbb{N}\\
    \bar{h_l}& \quad \text{if $m_{l}+i7l+5l\leq k< m_{l}+i7l+6l$}&\qquad i\in\{0,1,\ldots,\frac{m_{l+1}-m_{l}}{7l}-1\},\\
    Id & \quad \text{if $m_{l}+i7l+6l\leq k< m_{l}+i7l+7l$}&\qquad  j\in\{0,1,\ldots,\frac{m_{n+1}-m_{n}}{7n}-1\},\\
  
   h_n&\quad \text {if $m_{n}+j7n\leq k< m_{n}+j7n+n$}&\\
    Id & \quad \text {if $m_{n}+j7n+n\leq k< m_{n}+j7n+2n$}&\\
    \bar{h_n}& \quad \text{if $m_{n}+j7n+2n\leq k< m_{n}+j7n+3n$}&\\
    Id & \quad \text{if $m_{n}+j7n+3n\leq k< m_{n}+j7n+7n$}&\\
    
  \end{array} \right.
  \end{equation}
  where $2\mathbb{N}=\{2n:n\in\mathbb{N}\}$ and $2\mathbb{N}+1=\{2n+1:n\in\mathbb{N}\}$.
 
The sequence $\{f_k\}^{\infty}_{k=1}$ uniformly converges to the identity and therefore $f$ is continuous.  Let $(u,v)\in\{1\}\times I$ and $\alpha=d(u,v)$. After $l$ applications of $h_l$ the distance is contracted from $\alpha$ to $\frac{\alpha}{l}$ and it remains  the same until $l$ applications of $\bar{h_l}$ recover the distance to the original $\alpha$. The identity mappings between $h_l$ and $\bar{h_l}$ ensure that the distance is contracted to $\frac{\alpha}{l}$ for four sevenths of the times $i$ and then it is recovered to $\alpha$ for one seventh of the times $i$, when $m_{l}\leq i<m_{l+1}$ and $l$ is odd.
Hence $\frac{6}{7}\geq \Phi^*_{(u,v)}(\delta)\geq \frac{4}{7}$, for any $0<\delta<\alpha$. When $l$ is even, the distance is contracted to $\frac{\alpha}{l}$ for one seventh of the times $i$ and then it is recovered to $\alpha$ for four sevenths of the times $i$, for $m_{l}\leq i<m_{l+1}$. Therefore $\frac{3}{7}\geq \Phi_{(u,v)}(\delta)\geq \frac{1}{7}$, for any $0<\delta<\alpha$ and the set $\{1\}\times I$ is $DC2\frac{1}{2}$ but not $DC2$.\\
With the same reasoning we conclude that pairs in any fiber $\{\frac{1}{k}\}\times I$ are either $DC2\frac{1}{2}$ but not $DC2$ scrambled or they have eventually equal trajectories (because of the constant part of function $\bar{h_l}$). If $u\in\{\frac 1 k\}\times I$ and $v\in\{\frac {1}{k+n}\}\times I$, then we apply functions to $u$ with a delay of $n$ times. Blocks of identities are arbitrary long and $n$ is fixed, hence the delay does not change the limits - $\frac{6}{7}\geq \Phi^*_{(u,v)}(\delta)\geq \frac{4}{7}$ and $\frac{3}{7}\geq \Phi_{(u,v)}(\delta)\geq \frac{1}{7}$, for any $0<\delta<\beta$, where $\beta=d(f^n(u),v)$. In this case $(u,v)$ is also a $DC2\frac{1}{2}$ but not $DC2$ pair. If $\beta=0$, then $v\in Orb^+_f(u)$ and $(u,v)$ is asymptotic.\\
Notice that points in $\{0\}\times I$ are fixed and hence there are no scrambled pairs in $\{0\}\times I$. Suppose $u\not\in\{0\}\times I$ and $v\in\{0\}\times I$. Then $\frac{6}{7}\geq \Phi^*_{(u,v)}(\delta)$, for sufficiently small $\delta$, since the height of $u$ is contracted for at least $1/7$ of the time and $(u,v)$ is not DC2 (if $v=(0,0)$, $(u,v)$ is either an asymptotic pair or $\frac{6}{7}\geq \Phi^*_{(u,v)}(\delta)$, since $u$ is recovered for at least $1/7$ of the time). Therefore $X$ has no $DC2$ pairs.\\

\paragraph{\bf Acknowledgment}
We wish to thank Professor Jaroslav Sm\' ital for his support.\\

\end{document}